\theoremstyle{plain}
\newtheorem{theorem}{Theorem}[section]
\newtheorem{corollary}[theorem]{Corollary}
\newtheorem{lemma}[theorem]{Lemma}
\theoremstyle{remark}
\newtheorem{remark}[theorem]{Remark}
\newtheorem{example}[theorem]{Example}
\newcommand{\eps}{\varepsilon}
\newcommand{\R}{\mathds{R}}
\newcommand{\Rd}{\mathds{R}^d}
\newcommand{\wt}{\widetilde}
\begin{document}

\title{Drift reduction method for SDEs driven by inhomogeneous singular L{\'e}vy noise}

\author[T. Kulczycki]{Tadeusz Kulczycki}
\author[O. Kulyk]{Oleksii Kulyk}
\author[M. Ryznar]{Micha{\l} Ryznar}

\thanks{T. Kulczycki and M. Ryznar were supported in part by the National Science Centre, Poland, grant no. 2019/35/B/ST1/01633}
\address{Faculty of Pure and Applied Mathematics, Wroc{\l}aw University of Science and Technology, Wyb. Wyspia{\'n}skiego 27, 50-370 Wroc{\l}aw, Poland.}
\thanks{O. Kulyk has been supported through
the DFG-NCN Beethoven Classic 3 programme, contract no.
2018/31/G/ST1/02252 (National Science Center, Poland) and SCHI-419/11–1
(DFG, Germany)}
\email{tadeusz.kulczycki@pwr.edu.pl}
\email{oleksii.kulyk@pwr.edu.pl}
\email{michal.ryznar@pwr.edu.pl}

\pagestyle{headings}


%

\begin{abstract}
We study SDE
$$
d X_t = b(X_t) \, dt + A(X_{t-}) \, d Z_t,
\quad X_{0} = x \in \R^d, \quad t \ge 0
$$
where $Z=(Z^1, \dots, Z^d)^T$, with  $Z^i, i=1,\dots, d$ being independent one-dimensional symmetric jump L\'evy processes, not necessarily identically distributed. In particular, we cover the case when each $Z^i$ is one-dimensional symmetric $\alpha_i$-stable process ($\alpha_i \in (0,2)$ and they are not necessarily equal).

Under certain assumptions on $b$, $A$ and $Z$ we show that the weak solution to the SDE is uniquely defined and Markov, we provide a representation of the transition probability density and we establish H{\"o}lder regularity of the corresponding transition semigroup.

The method we propose is based on a reduction of an SDE with a drift term to another SDE without such a term but with coefficients depending on time variable. Such a method have the same spirit with the classic characteristic method and seems to be of independent interest.
\end{abstract}

\maketitle

\section{Introduction}
In this paper we study an SDE of the form
\begin{equation}
\label{main}
d X_t = b(X_t) \, dt + A(X_{t-}) \, d Z_t,
\quad X_{0} = x= (x_1,\dots,x_d)^T \in \R^d, \quad t\ge 0,
\end{equation}
where the driving process $Z$ has the form
\begin{equation}\label{Z}
Z=(Z^1, \dots, Z^d)^T,
\end{equation}
with  $Z^i, i=1,\dots, d$ being independent scalar symmetric L\'evy processes. Such L\'evy noise is \emph{essentially singular} in the terminology of \cite{KKS2021}. Namely, the L\'evy measure $\mu$ of $Z$ is concentrated on the collection of coordinate axes in $\R^d$, which combined with a non-trivial rotation occurring due to the matrix coefficient $A(x)$ yield lack of a single reference measure for the images of $\mu$ under the multiplicative mappings $z\mapsto A(x)z$ for different $x$. Such a structure of the L\'evy noise may result in a substantial novelties in a local behavior of the transition probabilities of the associated processes, e.g. \cite{KRS2021}, \cite{KKS2021}. The situation complicates even more when the components of the noise $Z^1, \dots, Z^d$ are allowed to have different scaling properties, e.g. in the case of $Z^i, i=1, \dots, d$ being scalar (symmetric) $\alpha_i$-stable processes with different indices $\alpha_i$.
For a detailed discussion of the new effects which appear for such \emph{inhomogeneous singular} L\'evy noises we refer for to our recent article \cite{KKR2021}, where SDEs with such noises  were studied.

The SDEs considered in \cite{KKR2021} did not involve a drift term. The method used therein was essentially analytical and based on a version of the \emph{parametrix method} for a construction of the heat kernel for the associated Markov process, treated  as a solution to the Kolmogorov backward differential equation.

In this article we consider a further natural  extension of the model, allowing the SDE \eqref{main} to have a drift term. It is known that, for the analytic methods mentioned above, to include a drift term to an SDE (or, equivalently, a gradient term in the associated PDE) is quite a non-trivial problem in the general setting, where the `order' of the non-local part of the generator is allowed to be smaller than one; that is, when the gradient term is a leading one in the entire generator.   One possibility to treat such models is to adapt the parametrix method for such a setting by introducing a special `flow corrector' method introduced in \cite{KK18}, \cite{K18}, see also a detailed discussion in \cite{KKS2021}. In this paper we develop another possibility,  which we believe to have an independent interest and be potentially useful for other types of SDEs with jumps.  Namely, let $t>0$ be fixed, and we would like to identify the law of the value $X_t$  of the solution to SDE \eqref{main} at this time moment. Denote by $\chi_t(x)$ the flow of solutions to the Cauchy problem
$$
\partial_t \chi_t(x) =  b(\chi_t(x)), \quad t\in \R, \quad \chi_0(x) = x,
$$
and consider the process
$$
X_s^t=\chi_{t-s}(X_s), \quad s\in [0,t].
$$

Clearly, at the terminal point $s=t$ the processes $X^t$ and $X$ are equal, that is $X^t_t=X_t$. On the other hand, under proper assumptions on the coefficients and noise in \eqref{main}, see Lemma \ref{c212} and Remark \ref{rem_c212} below, we can apply the It\^o formula to show that $X^t$ satisfies the SDE
\begin{equation}\label{main1}
 d X_s^t=\int_{\R^d}V^t_s(X_{s-}^t,z)N(ds, dz), \quad s\in [0,t], \quad X_0^t=\chi_t(x),
\end{equation}
where $N(ds,dz)$ is the Poisson random measure which corresponds to the L\'evy process $Z$ from \eqref{main} in the usual sense that
 $$
 dZ_s=\int_{\R^d}z N(ds, dz),
 $$
 and the coefficient $V^t_s(x,z)$ has the form
$$
V^t_s(x,z)=\chi_{t-s}(\kappa_{t-s}(x)+A(\kappa_{t-s}(x))z)-x,
$$
here and below we denote $\kappa_t(x)=\chi_{-t}(x)$, the \emph{inverse flow} for $\chi_t(x)$.

Next, the process $X^*_s=X^t_{t+s}, s\in [-t,0]$ satisfies the SDE
\begin{equation}\label{main2}
 d X_s^*=\int_{\R^d}V_s(X_{s-}^*,z)N^*(ds, dz), \quad s\in [-t,0], \quad X_{-t}^*=\chi_t(x)
\end{equation}
with
$$
V_s(x,z)=\kappa_{s}(\chi_{s}(x)+A(\chi_{s}(x))z)-x
$$
and a new Poisson point measure $N^*(ds, dz)$ which has the same intensity measure $ds\mu(dz)$ with the original $N(ds, dz).$   SDE \eqref{main2} does not have a drift term, and by the results of \cite{KKR2021} its solution is weakly unique and is a (time-inhomogeneous) Markov process with a transition probability  density $p_{t,s}(x,y)$.  Then the above stochastic equivalences immediately yield the same set of results for the initial equation  \eqref{main}. Namely, the solution to \eqref{main} is weakly unique, and its transition probability density can be expressed as
\begin{equation}\label{heat}
u_{t}(x,y) = p_{ -t, 0}(\chi_{t}(x),y).
\end{equation}

To summarize, the method we propose is based on a reduction,  by
a proper change of variables, of an SDE with a drift term to another SDE free from such a term.  Such a reduction seem to have the same spirit with the classic \emph{characteristic method}, which reduces a 1st order quasi-linear PDE to a family of ODEs on \emph{characteristic curves}. This  similarity becomes even better visible if we consider the Kolmogorov differential equation associated with \eqref{main}, see Section \ref{s24} below. We believe that such \emph{stochastic characteristic method} has an independent interest and is potentially  useful for  other types of SDEs with jumps.

The study of SDEs driven by singular L{\'e}vy noises have attracted attention in recent years. In \cite{BC2006} the weak
well-posedness for SDE (\ref{main}) was established under the assumption that $A(x)$ is bounded and non-degenerate for each $x \in \R^d$, $x \to A(x)$ is continuous, $b \equiv 0$ and $Z$ is a cylindrical $\alpha$-stable process. Strong Feller property for SDEs driven by additive cylindrical L{\'e}vy processes have been studied in \cite{PZ2011}. The existence of densities for SDEs driven by singular L\'evy processes have been studied in \cite{DF13} and \cite{FJR2021}. The strong well-posedness for SDEs driven by $\alpha$-stable-like L{\'e}vy processes (including cylindrical case) with H{\"o}lder drifts was established in \cite{CZZ2021}, giving an affirmative answer to an open problem proposed by Priola. In \cite{KKS2021} different stable-like models have been treated, where the stability index and the spherical kernel (i.e. the distribution of the jump direction) are space-dependent. Existence and representation of transition density and strong Feller property of the related semigroup have been established. The main idea of that paper, which can be heuristically described as a dynamical truncation of L{\'e}vy measures, is crucial for the results obtained in \cite{KKR2021} and indirectly for the results in our recent paper.
Properties of transition densities and H{\"o}lder regularity of transition semigroups for SDEs driven by singular L{\'e}vy noices under various assumptions have been studied in \cite{KRS2021}, \cite{KR2020}, \cite{CHZ2020} and \cite{KKR2021}.

The rest of the paper is organized as follows. In Section \ref{s2} we introduce notation, present assumptions and formulate main results. Section \ref{sec_Proofs} contains proofs, its most important part is Section \ref{s24}, where we explain the method of solution. In Section \ref{sec_Examples} we give examples of processes $Z$, drifts $b$ and matrices $A$ satisfying our assumptions.

\section{Main results}\label{s2}
\subsection{Notation and assumptions} In this section, we collect all the assumptions we impose on our model.

Notation:

\begin{itemize}
	\item All vectors $x\in \R^d$ are column type. For any $x,y \in \R^d$ by $x\cdot y$ we denote the standard scalar product of $x$ and $y$.
	\item For a function $f: \R^d\mapsto \R$ its gradient is defined $\nabla f(x)=\begin{pmatrix} \frac{\partial f}{\partial x_1} \\ \dots\\  \frac{\partial f}{\partial x_d} \end{pmatrix}$
	\item For a real $d \times d$ matrix $ A$ by $ |A|$ we denote its operator norm that is $ |A|=\sup_{|x|\le 1}|Ax|$
	\item For a function $h=\begin{pmatrix}  h_{1}\\ \dots\\  h_{d} \end{pmatrix}: \R^d\mapsto \R^d$, by
	$$Dh=\left[ \nabla h_{1}, \dots,  \nabla h_{d}\right]^T$$ we denote its derivative, that is  a $d\times d$ matrix which rows are equal to  $[\nabla h_k]^T, 1\le k\le d$. Note that, if $g:  \R^d\mapsto \R^d$, then
	we have the following formula for the derivative of the  composition $g$ with $h$, $w(x)= g(h(x))$,
	$$D w(x) =  Dg( h(x)) D h(x).$$
	\item  Note that, if $f:  \R^d\mapsto \R$, then
	we have the following formula for the gradient  of the  composition $f$ with $h$, $w(x)= f(h(x))$,
	$$\nabla w(x) =  [D h(x)]^T \nabla f( h(x)).$$
	\item In the whole paper we assume that constants denoted by $c, c_1, c_2, \ldots$ are positive and we adopt the convention that these constants may change their value from one use to the next.
	\item We denote $a \wedge b = \min(a,b)$, $a \vee b = \max(a,b)$.
\end{itemize}

Now we start the description of scalar L\'evy processes involved, as the coordinates, in the representation \eqref{Z}.
Let the characteristic exponent $\psi$ of a one-dimensional, symmetric L\'evy  process be given by
$\displaystyle{\psi(\xi) = \int_{\R} (1 - \cos(\xi x)) \nu(dx) }$,
where $\nu$ is a  symmetric, infinite L{\'e}vy measure. The corresponding  \emph{Pruitt function}  $h(r)$ is given by
$$
h(r) = \int_{\R} (1 \wedge (|x|^2 r^{-2})) \nu(d x), \quad r > 0.
$$

We will assume the following scaling conditions for the function $h$: for some $0<\alpha\leq \beta\leq 2$ and $0<C_1\le 1\le C_2<\infty$,
\begin{equation}
\label{h-scaling_u0}
C_1 {\lambda}^{-\alpha} h\left({r}\right) \le h\left(\lambda r\right) \le
C_2 {\lambda}^{-\beta} h\left({r}\right), \  0<r \le 1,\ 0<\lambda \le 1.
\end{equation}

It is well known (see e.g. \cite[Section 4]{KKR2021}) that the above assumption is equivalent to the following weak scaling property for $\psi$: there are constants $0<C_1^*\le 1\le C_2^*<\infty$,

\begin{equation}
\label{psi-scaling}
C_1^* {\lambda}^{\alpha} \psi\left({\xi}\right) \le \psi\left(\lambda \xi\right) \le
C_2^* {\lambda}^{\beta}  \psi\left({\xi}\right), \  |\xi| \ge 1,\ \lambda \ge 1.
\end{equation}

Once the condition  \eqref{h-scaling_u0} (or equivalently \eqref{psi-scaling})  is  satisfied, we say that the characteristic exponent $\psi$ (or the L\'evy measure $\nu$) have the weak scaling property with indices $\alpha, \beta$, and write $\psi\in \mathrm{WSC}(\alpha, \beta)$ (resp. $\nu\in \mathrm{WSC}(\alpha, \beta)$).
We also  observe that, by \cite[Corollary 6]{BGR14}, the L\'evy measure $\nu$ satisfies
\begin{equation}
\label{bound_nu}
\int_{|z|\leq 1}|z|^{\gamma}\nu(dz)<\infty,
\end{equation}
for any $\gamma>\beta$.

By $\psi_i$, $\nu_i$ and $h_i$ we denote corresponding characteristic exponents, L{\'e}vy measures and Pruitt functions of coordinates $Z^i$ of the process $Z = (Z^1, \ldots,Z^d)$ ; recall that these coordinates are assumed to be independent.

We will consider two cases:
\vskip 10pt\noindent
\textbf{(A)} All characteristic exponents $\psi_i, i=1,\dots, d$ are equal and $\psi_1 \in \mathrm{WSC}(\alpha, \beta)$.

\vskip 10pt\noindent\textbf{(B)}
Characteristic exponents $\psi_i, i=1,\dots, d$ are not  all  the same , i.e. there exist $i,j$ such that $\psi_i\not=\psi_j$,    and $\psi_i\in \mathrm{WSC}(\alpha, \beta)$, $i, j \in \{1,\ldots,d\}$.
\vskip 10pt

In both of these cases, the process $Z$ has the transition density $\wt {G}_t(x,y)=\wt {G}_t(y-x),$
where
$$
\wt {G}_t(w)=\prod_{i=1}^{d}\wt {g}_t^i(w_i), \quad w=(w_1,\dots, w_d)^T\in \R^d,
$$
and $\wt{g}_t^i, i=1,\dots,d$ are the distribution densities for the coordinates (all $\wt{g}^i_t$ are the same in the case \textbf{(A)}).

Next, we assume the following conditions on the coefficients.

\vskip 10pt\noindent \textbf{(C)}
For any $x \in \R^d$ $A(x) = (a_{i,j}(x))$ is a $d \times d$ matrix and there are constants $C_3, C_4, C_5 > 0$, $\eta_1 \in (0,1]$ such that for any $t \geq  0$, $x, y \in \R^d$, $i, j \in \{1,\ldots,d\}$,
\begin{equation}
\label{bounded}
|A(x)| \le C_3,
\end{equation}
\begin{equation}
\label{determinant}
|\det(A(x))| \ge C_4,
\end{equation}
\begin{equation}
\label{Holder}
|A(x)-A(y)|\le C_5 |x - y|^{\eta_1},
\end{equation}
The function $\R^d \ni x \to b(x) \in \R^d$ is differentiable and there are constants $C_6, C_7 > 0$ and
\begin{equation}
\label{eta2}
\eta_2 >\max(0, (\beta-1))
\end{equation}
such that for any  $x, y \in \R^d$,
\begin{equation}
\label{b_grad_est}
|Db(x) | \le C_6,
\end{equation}
\begin{equation}
\label{b_estimate}
|Db(x) -Db(y)| \le C_7 |x-y|^{\eta_2}.
\end{equation}

\vskip 10pt
Now we introduce functions $\kappa_t(x)$, $\chi_t(x)$ and a family of matrices $A_t(x)$ which play a crucial role in our paper.
Let $\kappa$, $\chi$ be solutions of the following differential equations
$$
\partial_t \kappa_t(x) = - b(\kappa_t(x)), \quad \text{for $t \in \R $, $x \in \R^d$},
$$
$$
\partial_t \chi_t(x) =  b(\chi_t(x)), \quad \text{for $t \in \R $, $x \in \R^d$},
$$
with initial conditions $\kappa_0(x) = \chi_0(x) = x$, for $x \in \R^d$.
Note that $\chi_t(x) = \kappa_{-t}(x)$ for $t\in \R$, $x \in \R^d$. It is a standard fact that if assumptions \textbf{(C)} are satified then the above initial value problems have unique solutions. Properties of $\kappa_t(x)$, $\chi_t(x)$ are discussed in Section \ref{flow}.
For $t \in \R $, $x, y \in \R^d$ we define
$$
 A_t(x) = (D \kappa_{t})(\chi_{t}(x)) A(\chi_{t}(x)).
$$

We need to impose a further assumption on the coefficients, which we present in the following two slightly  different forms.
{ \vskip 10pt\noindent \textbf{(D)}
There is a  constant $C_{8} > 0$ such that for any  $x \in \R^d$
\begin{equation}
\label{b_est_max}
|b(x)| \le C_{8}.
\end{equation}

\textbf{(D)} combined with \textbf{(C)} yields the following:
{ \vskip 10pt\noindent \textbf{(D')}}
There is constant $C_9 > 0$ such that for any  $t, s \in \R$ and $x\in \R^d$
\begin{equation}
\label{D11}
|A_t(x) - A_s(x)| \le C_9 e^{C_9(|t| \vee |s|)} |s - t|^{\eta_1 \wedge \eta_2}.
\end{equation}

The justification that \textbf{(D)} combined with \textbf{(C)} implies \textbf{(D')} is contained in Section \ref{sec_Proofs}, see Lemma \ref{CD}. Example \ref{Ex_D} shows that \textbf{(D')} is strictly weaker than \textbf{(D)}.

\vskip 10pt
In the case \textbf{(A)}, the H\"older index $\eta_1$ can be arbitrarily small. In the case \textbf{(B)}, $\alpha$, $\beta$, $\eta_1$ and $\eta_2$ should satisfy certain additional assumptions. Namely, we assume the following

\vskip 10pt\noindent\textbf{(E)}
\begin{equation}
\label{indices}
\frac{\beta}{\alpha} < 1 + (\eta_1 \wedge \eta_2), \quad \frac{1}{\alpha} - \frac{1}{\beta} < \eta_1 \wedge \eta_2.
\end{equation}


For abbreviation, for any $u > 0$ we will use the notation
$$
\sigma(u) = (u, h_1(1), \ldots, h_d(1),  h^{-1}_1(1), \ldots, h^{-1}_d(1), h_1^{-1}(1/u), \ldots, h_d^{-1}(1/u)).
$$

\subsection{Main statements} In this section, we formulate the main statements of the paper.

For $t > 0$, $x, y \in \R^d$ define

\begin{equation}
\label{Euler}
\wt{u}_t(x,y) = \frac{1}{|\det A(\chi_t(x))|}\wt {G}_{t}(A^{-1}(\chi_t(x))(y-\chi_t(x))),
\end{equation}
where $\wt {G}_t(\cdot)$ is the distribution density of $Z_t$.


\begin{theorem}
\label{main_thm} Assume either \textbf{(A)}, \textbf{(C)}, \textbf{(D')} or \textbf{(B)}, \textbf{(C)}, \textbf{(D')}, \textbf{(E)}. Then for any $ x\in \R^d$ the SDE (\ref{main}) has a unique weak solution $X$. The process $X$ is a Markov process which has a transition density $u_t(x,y)$. The transition density admits a representation
\begin{equation}\label{representation}
u_t(x,y)=\wt u_{t}(x,y)+\wt q_{t}(x,y),\quad x,y\in \R^d, \quad t > 0 ,
\end{equation}
where $\wt u_{t}(x,y)$ is given by \eqref{Euler} and the residual part $\wt q_{t}(x,y)$ satisfies
\begin{equation}
\label{q_estimate}
\int_{\R^d} |\wt q_{t}(x,y)| \, dy \leq c \, t^{\eps_0}, \quad x\in \R^d,
\end{equation}
where $\eps_0$ is defined in Remark \ref{remark_epsilon} and
the constant $c$ depends only on $d$, $\alpha$, $\beta$, $\eta_1$, $\eta_2$, $C_1, \ldots, C_7$, $C_9$, $h_1(1), \dots, h_d(1)$.
\end{theorem}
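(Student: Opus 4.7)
The plan is to execute the stochastic characteristic reduction outlined in the introduction and then invoke the existing results of \cite{KKR2021} for the driftless equation. First, I would verify carefully that the transformation $X^t_s = \chi_{t-s}(X_s)$ is well defined as an $\R^d$-valued semimartingale and that It\^o's formula can be applied. Because $\chi$ and $\kappa$ are $C^{1,\eta_2}$ flows (a standard consequence of \textbf{(C)} that I would first record in a short lemma, specifically using \eqref{b_grad_est}, \eqref{b_estimate} and Gr\"onwall), the composition with the jump process is well controlled, and the It\^o formula (in the reference \ref{c212} of the excerpt) directly produces \eqref{main1}, so that the time-reversed process $X^*_s = X^t_{t+s}$ solves \eqref{main2} with the Poisson measure $N^*$ having the same intensity as $N$.

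Next, I would check that \eqref{main2} falls within the scope of the driftless theory developed in \cite{KKR2021}. The coefficient of the driftless SDE, when linearized around $z=0$, is exactly $A_s(x) = (D\kappa_s)(\chi_s(x))A(\chi_s(x))$, so the boundedness, non-degeneracy and H\"older continuity in $x$ of $s\mapsto A_s(\cdot)$ inherited from \textbf{(C)} must be transferred to $(s,x)\mapsto V_s(x,z)$ uniformly on $s\in[-t,0]$, and the continuity in the time variable is precisely \textbf{(D')}, i.e.\ \eqref{D11}. In case \textbf{(B)} I would verify that the scaling assumption \textbf{(E)} is preserved by the transformation; this is immediate since the indices $\alpha,\beta,\eta_1,\eta_2$ are not changed by composing with a smooth flow. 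With those verifications in hand, \cite{KKR2021} supplies (for the time-inhomogeneous, driftless SDE \eqref{main2}) weak uniqueness of the solution, the Markov property, and an explicit representation of the transition density $p_{t,s}(x,y)$ in terms of a leading Euler-type kernel plus an $L^1$-small residual.

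Then I would transfer the conclusions back to the original equation via the identity $u_t(x,y) = p_{-t,0}(\chi_t(x),y)$ coming from $X^t_t = X_t$ with $X^t_0 = \chi_t(x)$. Weak uniqueness and the Markov property for \eqref{main} are immediate from the corresponding statements for \eqref{main2} because $\chi_t$ is a homeomorphism. For the representation \eqref{representation} I would identify the leading term: the Euler kernel for \eqref{main2} started at $\chi_t(x)$ at time $-t$ freezes the coefficient at the initial point, producing the Gaussian-like (in the sense of the driving L\'evy distribution) density of $\chi_t(x)+A_0(\chi_t(x))Z^*_t$. Since $A_0(z) = A(z)$, a change of variables yields precisely \eqref{Euler}. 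The residual $\wt q_{t}(x,y)$ collects both the residual already produced in \cite{KKR2021} and the error arising from comparing the frozen coefficient $A(\chi_t(x))$ with $A_{-t+s}(\chi_t(x))$ for $s\in[0,t]$, which by \eqref{D11} is of order $t^{\eta_1\wedge\eta_2}$ uniformly on bounded time intervals.

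The main obstacle, which I expect to absorb most of the work, is the $L^1$-bound \eqref{q_estimate}: one must carefully track the exponent $\eps_0$ through the parametrix/Duhamel series in \cite{KKR2021}, ensure that the comparison between the leading term provided there (which in the natural coordinates uses $A_{-t}(\chi_t(x))$) and the Euler term \eqref{Euler} introduces only a further error of order $t^{\eta_1\wedge \eta_2}$ after integrating in $y$, and in case \textbf{(B)} that the anisotropic scaling controlled by \eqref{indices} yields a positive exponent after these comparisons. Once this is done, \eqref{q_estimate} follows and the proof is complete.
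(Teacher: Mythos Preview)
Your proposal is essentially the paper's own proof: reduce to the driftless SDE via the flow, verify that $A_t(x)$ and $V_t(x,z)$ satisfy the hypotheses of \cite{KKR2021} (this is the content of Lemma~\ref{assumptions} and Remark~\ref{rem_assumptions} in the paper), invoke that reference for the density $p_{t,s}$, and read off $u_t(x,y)=p_{-t,0}(\chi_t(x),y)$.

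One small correction: you are slightly off about which coefficient the Euler kernel from \cite{KKR2021} uses. In that paper the frozen kernel is $p_{t,s}^x(w)=|\det A_s(x)|^{-1}G_{s-t}(A_s^{-1}(x)w)$, i.e.\ the freezing is at the \emph{terminal} time $s$, not the initial time. For the pair $(t,s)=(-t,0)$ this gives $A_0(\chi_t(x))=A(\chi_t(x))$ directly, so there is no extra ``$A(\chi_t(x))$ vs.\ $A_{-t}(\chi_t(x))$'' error to absorb and \textbf{(D')} is not needed at this last identification step (it is used internally in \cite{KKR2021}). The chain of $L^1$ comparisons the paper actually makes is
\[
p_{-t,0}(\chi_t(x),y)\ \to\ p_{-t,0}^{y}(\chi_t(x)-y)\ \to\ p_{-t,0}^{\chi_t(x)}(\chi_t(x)-y)\ \to\ \overline{p}_{-t,0}^{\chi_t(x)}(\chi_t(x)-y)=\wt u_t(x,y),
\]
where the three gaps are controlled respectively by \cite[(3.13),(5.4)]{KKR2021}, \cite[Lemma~5.8]{KKR2021}, and \cite[Lemma~4.8]{KKR2021} (the last step replaces the modified density $G$ by the true density $\wt G$). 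With this adjustment your outline matches the paper.
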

\begin{remark}
\label{rem_main_thm}
One may think of $\wt u_{t}(x,y)$ as the "principal part" of the transition density and $\wt q_{t}(x,y)$ as the corresponding "residual part". If additionally \textbf{(D)} is satisfied, then (\ref{representation}) and (\ref{q_estimate})  remain true with the principal part in the following simpler form
\begin{equation*}
\check{u}_t(x,y) = \frac{1}{|\det A(x)|}\wt {G}_{t}(A^{-1}(x)(y-\chi_t(x))).
\end{equation*}
\end{remark}

Denote by $\mu$ the L\'evy measure of the process $Z$, and define for $t \in \R$, $x, z \in \R^d$
$$
T^{t,z}f(x)= f(x+V_t(x,z)),
$$
where we recall that
$$
 V_t(x,z) = \kappa_{t}(\chi_{t}(x) + A(\chi_{t}(x)) z) - x.
$$

Assume the following.
\vskip 10pt\noindent
{\bf{(I)}} For all $t$ and $\mu$-a.a. $z$,  $T^{t,z}$ is a bounded linear operator in $L_{1}(\R^d)$, and there exists $C_0<\infty$ such that
$$
\|T^{t,z}\|_{L_1\to L_1}\leq C_0, \quad t\in \R, \quad z\in \mathrm{supp}\, \mu.
$$

We have the following representation of the transition density.
\begin{theorem}
\label{main_thm2}
Let the conditions of Theorem \ref{main_thm} and additional assumption \textbf{(I)} hold. Then the density
$u_t(x,y)$ is bounded, that is
$$\sup_{x,y\in \R^d} u_{t}(x,y)< \infty, \quad 0 < t < \infty.$$
 Moreover, for any $\tau > 0$   there exists  $c>0$, depending only on
 $d$, $\alpha$, $\beta$, $\eta_1$, $\eta_2$, $C_0, \ldots, C_7$, $C_9$, $\sigma(\tau)$
such that  the residual term in the representation \eqref{representation} satisfies

\begin{equation}
\label{q_pointwise_estimate}
|\wt q_{t}(x,y)|\leq c\wt {G}_{t}(0) t^{\eps_0}, \quad 0< t \le \tau,\quad  x,y\in \R^d.
\end{equation}

\end{theorem}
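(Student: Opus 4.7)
The plan is to transfer both statements to the driftless time-inhomogeneous SDE \eqref{main2} via the identity $u_t(x,y)=p_{-t,0}(\chi_t(x),y)$ recorded in \eqref{heat}, and then apply the parametrix construction that already underpins the proof of Theorem \ref{main_thm}. Since $\chi_t$ is a $C^1$-diffeomorphism with Jacobian uniformly bounded in $x$ for each fixed $t$ (which follows from \textbf{(C)} by a standard Gronwall argument on $\partial_t\chi_t=b(\chi_t)$), any pointwise bound on $p_{-t,0}(u,y)$ in the $u$ variable lifts to the same bound for $u_t(x,y)$, so it suffices to establish the boundedness and the residual estimate for the driftless SDE.

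For the driftless SDE the parametrix construction expresses $p_{t,s}(x,y)$ as the principal part $\wt p_{t,s}(x,y)$ plus a residual of the form
$$
\wt q_{t,s}(x,y) = \int_s^t\!\!\int_{\R^d} \wt p_{t,r}(x,z)\,\Phi_{r,s}(z,y)\,dz\,dr + (\text{higher iterates}),
$$
where the error kernel $\Phi$ measures the discrepancy between the generator applied to the principal part in the forward and backward variables. Theorem \ref{main_thm} supplies the $L^1$-in-$y$ bound $\int_{\R^d}|\Phi_{r,s}(z,y)|\,dy\le c(r-s)^{\eps_0-1}$, which yields \eqref{q_estimate} after $r$-integration. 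To upgrade this to a pointwise estimate I would split the time integral into $r\in[s,(t+s)/2]$ and $r\in[(t+s)/2,t]$. On the first half the principal part $\wt p_{t,r}(x,z)$ admits the pointwise bound $c\,\wt G_{t-r}(0)\le c\,\wt G_{(t-s)/2}(0)$ by \eqref{Euler} and \eqref{determinant}, so one may pull this supremum outside and be left with $\int\!\int|\Phi_{r,s}(z,y)|\,dz\,dr$. This is precisely where assumption \textbf{(I)} enters: when $\Phi$ is expanded as a difference of two non-local generator applications, the $L^1$-in-$z$ bound on each term follows from the uniform $L^1$-boundedness of $T^{r,z}$, by the change of variables $z\mapsto z+V_r(z,w)$ under the integral in $z$.

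On the second half $r\in[(t+s)/2,t]$ the roles of the two kernels are swapped: $\Phi_{r,s}(z,y)$ itself is estimated pointwise by $c\,\wt G_{r-s}(0)(r-s)^{\eps_0-1}$, again using \textbf{(I)} together with a short-time regularisation, while $\wt p_{t,r}(x,\cdot)$ is integrated out in $z$ producing a constant. Combining the two halves and iterating along the parametrix series yields the desired bound $|\wt q_t(x,y)|\le c\,\wt G_t(0)\,t^{\eps_0}$ uniformly on $(0,\tau]$. The boundedness of $u_t$ then follows at once: $\wt u_t(x,y)\le c\,\wt G_t(0)$ by \eqref{Euler} and \eqref{determinant}, and $\wt G_t(0)<\infty$ for every $t>0$ by the scaling property \eqref{h-scaling_u0}. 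The main obstacle is the bookkeeping of the ``swap'' argument so that assumption \textbf{(I)} can be invoked uniformly at each iteration of the parametrix series; once the one-step symmetric $L^1$ bound for $\Phi$ is in hand, the rest is a routine upgrade of the $L^1$ proof of Theorem \ref{main_thm} to a pointwise statement.
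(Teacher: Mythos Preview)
Your reduction to the driftless equation via $u_t(x,y)=p_{-t,0}(\chi_t(x),y)$ is exactly the paper's starting point, and the paper then does nothing more than cite the pointwise machinery already built in \cite{KKR2021}: boundedness from \cite[Theorem~2.2]{KKR2021}, and the residual bound from the very same three-term decomposition $p_{t,s}-p^y_{t,s}$, $p^y_{t,s}-p^x_{t,s}$, $p^x_{t,s}-\overline p^x_{t,s}$ that was used in the $L^1$ proof, now invoking \cite[Lemmas~4.8, 5.8]{KKR2021} and Section~3.5 there for the \emph{pointwise} versions. So the paper's own argument is a two-line transfer plus a citation; what you have written is a sketch of the content of those citations. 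In that sense your route and the paper's are the same.

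Your sketch of the time-splitting for the first parametrix iterate is the standard mechanism, and you correctly locate where assumption \textbf{(I)} enters: it is precisely what allows the $L^1$-in-$z$ (first variable) bound on the error kernel $\Phi_{r,s}(z,y)$, because the nonlocal generator applied in the $z$-variable produces terms of the form $f(z+V_r(z,w))$ whose $L^1(dz)$-norm is controlled by $\|T^{r,w}\|_{L_1\to L_1}$. This is the piece that has no analogue in the $L^1$-in-$y$ argument of Theorem~\ref{main_thm}.

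Where your description wobbles is the second half. A pointwise bound $|\Phi_{r,s}(z,y)|\le c\,\wt G_{r-s}(0)(r-s)^{\eps_0-1}$ does not use \textbf{(I)} at all; it comes from the H\"older continuity of the coefficients and direct kernel estimates on $\wt G$ and its shifts. Assumption \textbf{(I)} is an $L^1$ hypothesis and contributes nothing to a supremum in $z$. What actually happens in the iteration is that one needs the $L^1$ bound on $\Phi$ \emph{in both variables} (this is your ``symmetric $L^1$ bound''), and it is only the first-variable half of that symmetry that consumes \textbf{(I)}. So your final summary paragraph has it right; the sentence about the second half invoking \textbf{(I)} ``together with a short-time regularisation'' is misplaced and should be dropped.
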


Define  by $\{U_{t}\}$ the \emph{evolutionary family} corresponding to the process $X$ in the usual way: for any $0 < t < T$, $x\in \R^d$ and a bounded Borel function $f: \R^d \to \R$,
$$
U_{t} f(x) = \int_{\R^d} u_{t}(x,y) f(y) \, dy.
$$
Under just the basic conditions of Theorem \ref{main_thm}, we prove  H\"older continuity of this evolutionary family.
\begin{theorem}
\label{Holder_thm_new}
Assume either \textbf{(A)},\textbf{(C)}, \textbf{(D')} or \textbf{(B)}, \textbf{(C)}, \textbf{(D')}, \textbf{(E)}.
For any $0 < \gamma < \gamma' < \alpha$, $\gamma \le 1$,  $0<t < \tau$,  $x, y \in \Rd$ and a bounded Borel function $f: \R^d \to \R$ we have
$$
\left|U_{t} f(x) - U_{t}f(y)\right|
\le c |x - y|^{\gamma} t^{-\gamma'/\alpha} \|f\|_{\infty},
$$
where $c$ depends only on $\gamma$, $\gamma'$, $d$, $\alpha$, $\beta$, $\eta_1$, $\eta_2$, $C_1, \ldots, C_7$, $C_9$, $\sigma(\tau)$.
\end{theorem}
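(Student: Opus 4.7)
The natural strategy is to exploit the stochastic characteristic reduction developed in the paper. By \eqref{heat} we have $u_t(x,y)=p_{-t,0}(\chi_t(x),y)$, where $p_{s,s'}$ is the transition density of the drift-free, time-inhomogeneous process $X^*$ governed by SDE \eqref{main2}. If we write $P_{s,s'}f(x)=\int p_{s,s'}(x,y)f(y)\,dy$, then
\begin{equation*}
U_t f(x)=P_{-t,0}f(\chi_t(x)),
\end{equation*}
so the whole problem is transferred to proving a H\"older estimate for the drift-free evolutionary family and then composing with the flow $\chi_t$.

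First I would invoke the results of \cite{KKR2021} for the drift-free SDE \eqref{main2}. The coefficient $V_s(x,z)$ there is built out of $\chi_s$, $\kappa_s$ and $A$, and conditions \textbf{(C)}, \textbf{(D')}, \textbf{(E)} were precisely arranged so as to guarantee that the analogues of the hypotheses of \cite{KKR2021} hold uniformly on $s\in[-\tau,0]$ (see Lemma \ref{c212} and the discussion around \textbf{(D')}). Consequently the H\"older regularity of the semigroup from \cite{KKR2021}, applied in the time-inhomogeneous setting, yields
\begin{equation*}
\left|P_{-t,0}f(u)-P_{-t,0}f(v)\right|\le c\,|u-v|^{\gamma}\,t^{-\gamma'/\alpha}\|f\|_{\infty}
\end{equation*}
for $0<\gamma<\gamma'<\alpha$, $\gamma\le 1$, $0<t<\tau$, with a constant $c$ depending only on the parameters listed in the statement.

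Next I would use \textbf{(C)}, specifically \eqref{b_grad_est}, to control the flow. Differentiating $\chi_t(x)-\chi_t(y)$ with respect to $t$ and applying Gronwall's inequality gives
\begin{equation*}
|\chi_t(x)-\chi_t(y)|\le e^{C_6 t}|x-y|\le e^{C_6\tau}|x-y|,\qquad 0<t\le\tau,\ x,y\in\R^d.
\end{equation*}
Plugging this into the composition formula for $U_t f$ and using $\gamma\le 1$, we obtain
\begin{equation*}
|U_t f(x)-U_t f(y)|\le c\,|\chi_t(x)-\chi_t(y)|^{\gamma}\,t^{-\gamma'/\alpha}\|f\|_{\infty}\le c\,e^{C_6\gamma\tau}|x-y|^{\gamma}t^{-\gamma'/\alpha}\|f\|_{\infty},
\end{equation*}
which is the asserted inequality after absorbing $e^{C_6\gamma\tau}$ into the constant.

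The main obstacle is the first step: verifying that the H\"older regularity proved in \cite{KKR2021} carries over to our time-inhomogeneous coefficient $V_s(x,z)$ with a constant independent of $s\in[-\tau,0]$ and with the sharp rate $t^{-\gamma'/\alpha}$. This requires re-examining the parametrix-type construction of $p_{s,s'}$ together with the bounds on the principal and residual parts (Theorem \ref{main_thm2} and the estimates leading to \eqref{q_pointwise_estimate}), and tracking that all smoothness inputs on $V_s$ follow from \textbf{(C)} and \textbf{(D')} uniformly in the time variable; the scaling gymnastics in case \textbf{(B)} is controlled exactly by \textbf{(E)}, and in case \textbf{(A)} only a minimal amount of H\"older regularity is needed, consistent with the fact that $\eta_1$ may be arbitrarily small there. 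Once the time-inhomogeneous H\"older estimate for $P_{-t,0}$ is in hand, the rest of the proof is the short composition argument above.
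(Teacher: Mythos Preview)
Your proposal is correct and follows essentially the same route as the paper: write $U_t f(x)=P_{-t,0}f(\chi_t(x))$, apply the H\"older estimate from \cite[Theorem~2.3]{KKR2021} to $P_{-t,0}$, and then use the Lipschitz bound on the flow $\chi_t$ from Lemma~\ref{chi_derivative}. The only minor correction is that the verification of the hypotheses of \cite{KKR2021} for the coefficient $V_s(x,z)$ is handled directly by Lemma~\ref{assumptions} and Remark~\ref{rem_assumptions}, so there is no need to re-examine the parametrix construction or invoke Theorem~\ref{main_thm2} (which in any case requires the extra assumption \textbf{(I)}).
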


\begin{remark}
\label{remark_epsilon}
The constant $\eps_0$, which appears in above theorems is chosen in the following way. Put $\eta = \eta_1 \wedge \eta_2$.
 In the case (A) we set
\begin{eqnarray*}\eps_0&=& \min \left\{ \frac{\eta \alpha}{2(d+3)\beta}, \frac{\eta \alpha}{2(d+3)},
\frac{  \eta/(\beta(1+\eta))}{2+2/\alpha +  \eta/(\beta(1+\eta))  },\frac {\eta_2}{\eta_2+\beta(1+(d+1)/\alpha)}\right\}, \end{eqnarray*}
while in the case (B) we pick
\begin{eqnarray*}\eps_0&=& \min \left\{ \frac{(1 + \eta)/\beta - 1/\alpha}{2(d+3)/\alpha}, \frac{\eta - \left(1/\alpha - 1/\beta\right)}{2(d+3)/\alpha},
\frac{ 1/\beta - 1/((1+\eta)\alpha)}{2+2/\alpha +1/\beta - 1/((1+\eta)\alpha)},\right.\\
&& \quad \quad \quad \left.
\frac {1+\eta_2-\beta/\alpha}{1+\eta_2+\beta(1+d/\alpha)}\right\}. \end{eqnarray*}
Due to our assumptions $\eps_0$ is positive.

Such choice of $\eps_0$ follows from \cite[Remark 5.1]{KKR2021} and Lemma \ref{assumptions}.
\end{remark}

\section{Proofs}
\label{sec_Proofs}

\subsection{Classes of functions} In this section, we define some classes of real functions which will be needed in the sequel.

For $f \in C^1(\R^d)$ and $\eta, r >0$ we put
$$
\rho_{\eta,r}(f)=\sup_{x,y:\ 0<|x- y|< r} \frac{|\nabla f(x)-\nabla f(y)|}{|x-y|^{\eta}}.
$$

By $C_b^1(\R^d)$ we understand the class of functions which are in $C^1(\R^d) \cap C_b(\R^d)$ and have all first order derivatives bounded.
For $\eta > 0$ we define
$$
C_b^{1,\eta}(\R^d) = \{f \in C_b^1(\R^d): \, \rho_{\eta,1}(f) < \infty\},
$$

Next, we define classes of functions of two variables $t$ and $x$. Let $\eta, r > 0$ and $\text{I} \subset \R$ be an interval. For $f: \text{I} \times \R^d \to \R$ such that $f(t,x)$ is $C^1$ with respect to $t \in \text{I}$ we put
$$
D_1 f(t,x) = \partial_t f(t,x).
$$
For $f: \text{I} \times \R^d \to \R$ such that $f(t,x)$ is $C^1$ with respect to $x \in \R^d$ we put
$$
D_2 f(t,x) = \nabla_x f(t,x)
$$
and
$$
\rho^I_{\eta,r}(f)=\sup_{t,x,y:0<|x- y|< r, t\in I} \frac{|D_2 f(t,x)-D_2 f(t,y)|}{|x-y|^{\eta}}.
$$

$C_b^{1,(1,\eta_2)}(\text{I} \times \R^d)$ is the class of functions $f(t,x)$, $f: \text{I} \times \R^d \to \R$, such that $f$ is $C^1$ in $t$, $C^1$ in $x$, has its first order derivatives continuous on $\text{I} \times \R^d$, for any fixed $t \in \text{I}$ the function and the derivatives are in $C_b(\R^d)$ and $ \rho^I_{\eta,1}(f) < \infty$.



\begin{remark}
\label{r_dependence}
It is easy to note that in the above definitions the function $\rho_{\eta,1}$ can be replaced by $\rho_{\eta,r}$ for some $r>0$.
\end{remark}
\begin{remark}\label{Eta_bound} Suppose that $f\in C^{1,\eta}(\R^d)$. If $|x-y|\le r$,  then
\begin{equation}
\label{eta_bound}|f(y)-f(x)- \nabla f(x)\cdot (y-x)|\le \rho_{\eta,r}(f) |y-x|^{1+\eta}.
\end{equation}
\end{remark}
\begin{proof} By Taylor's formula with the remainder of order $1$ we have
$$f(y)-f(x)= \int_0^1 \nabla f(x+t(y-x))\cdot (y-x)dt.$$ Hence
$$f(y)-f(x)- \nabla f(x)\cdot (y-x)= \int_0^1 [\nabla f(x+t(y-x))-\nabla f(x)]\cdot (y-x)dt.$$
This yields
\begin{eqnarray*}\left|f(y)-f(x)- \nabla f(x)\cdot (y-x)\right|&=& \left|\int_0^1 [\nabla f(x+t(y-x))-\nabla f(x)]\cdot (y-x)dt\right|\\
&\le& |y-x|^{1+\eta}\int_0^1 \frac{\left|\nabla f(x+t(y-x))-\nabla f(x)\right|}{|y-x|^{\eta}}dt,\end{eqnarray*}
which completes the proof.
\end{proof}

\subsection{Main idea of the proof}\label{s24}
In this section we will explain the method of the solution. In principal, the idea is to transform our SDE (\ref{main}) (with a drift) to the auxiliary time-inhomogeneous SDE (without a drift).


Let us define the operator $Q$
$$
Q f(x) = \nabla f(x)  \cdot b(x) + Q^{(jump)} f(x),
$$
$$\begin{aligned}
Q^{(jump)} f(x) &=\mathrm{P.V.}\int_{\R^d}\Big(f(x+A(x) z)-f(x)\Big)\mu(d z)
\\&=\sum_{k=1}^d\int_{\R}\Big(f(x+v A(x)\mathbf{e}_k)-f(x)
\\&\hspace*{3.5cm}-v 1_{|v|\leq 1}\nabla f(x)\cdot A(x)\mathbf{e}_k\Big)\nu_k(d v),
\end{aligned}
$$
where $\mu$ is the L\'evy measure of the process $Z$, $\nu_k$ is the L\'evy measure of the $k$-th component $Z^k, k=1, \dots, d$, and $\mathrm{P.V.}$ means that the first integral is taken in the principal value sense. One can easily show, using (\ref{bound_nu}) and (\ref{eta_bound}),  that for each $f \in C_ {b}^{1,\eta_2}(\R^d)$ we have $Q f \in C_b(\R^d)$; recall that $1+\eta_2>\beta$.

One can naturally expect that, once the solution $X$ to \eqref{main} is well defined for $0 \le t $ and is a Markov process, the operator $Q$ should be its generator. Corresponding \emph{Kolmogorov's backward differential equation} for the transition probability density $u_t(x,y)$ of $X$ has the form
\begin{equation}\label{forward}
\partial_t u_{t}(x,y)= Q_x u_t(x,y), \quad  t\ge 0 , \quad x,y \in \R^d,
\end{equation}
here and below $x$ at the operator $Q_{x}$ indicates that the operator $Q$ is applied with respect to the variable $x$. Together with the initial condition
\begin{equation}\label{initial}
u_{t}(x,y) \to \delta_x(y), \quad t\to 0^+,
\end{equation}
this actually gives a hope that  $u_t(x,y)$ can be identified as the \emph{fundamental solution} to the parabolic Cauchy problem for the operator $Q$.

As we have outlined in the Introduction, we would like to avoid solving the above Cauchy problem directly and to use already available results from \cite{KKR2021} instead. For that,  we consider the  (time-dependent) operator family  $\{L_t\}$
\begin{equation}\label{Lt}
\begin{aligned}
L_tf(x)&=\mathrm{P.V.}\int_{\R^d}\Big(f(x+V_t(x,z))-f(x)\Big)\mu(d z), \quad t\in \R,\quad x \in \R^d,
\end{aligned}
\end{equation}
recall that $V_t(x,z) = \kappa_{t}(\chi_{t}(x) + A(\chi_{t}(x)) z) - x$ for $t \in \R$, $x, z \in \R^d$.
One can easily show, using (\ref{bound_nu}) and (\ref{eta_bound}), that $L_t f\in C_b(\R^d)$.
It is clear (e.g. by the virtue of the It\^o formula) that the operator $L_t$ is a generator for the (time-inhomogeneous) Markov process defined by \eqref{main2}.

The following lemma relates  the new operator family $\{L_t\}$ with the original generator $Q$.

\begin{lemma}\label{l210}
\label{basic}
Let  $f \in C_b^{1,(1,\eta_2)}(\text{I} \times \R^d)$, where $\text{I} \subset \R$ is an interval. Put $g(t,x) = f(t,\kappa_t(x))$. Then for any $t \in \text{I}$ and $x \in \R^d$ we have
$$
\partial_t g(t,x)+ Q_xg(t,x)= (D_1 f)(t,\kappa_t(x))+ (L_{t,x} f)(t,\kappa_t(x)).
$$
\end{lemma}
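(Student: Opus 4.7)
The plan is to compute both sides directly using the chain rule and a key identity linking $\kappa_t$ with the flow of $b$.

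First I would compute $\partial_t g(t,x) = D_1 f(t,\kappa_t(x)) + D_2 f(t,\kappa_t(x)) \cdot \partial_t \kappa_t(x) = D_1 f(t,\kappa_t(x)) - D_2 f(t,\kappa_t(x))\cdot b(\kappa_t(x))$ using the ODE defining $\kappa_t$. Next I would split $Q_x g = \nabla_x g(t,x)\cdot b(x) + Q^{(jump)}_x g(t,x)$ and use the composition formula for the gradient recorded in the Notation section, namely $\nabla_x g(t,x) = [D\kappa_t(x)]^T D_2 f(t,\kappa_t(x))$. Comparing the two sides reduces the claim to two identities: first, $D\kappa_t(x)\, b(x) = b(\kappa_t(x))$, which cancels the drift contribution against the extra term from $\partial_t g$; and second, $(L_{t,x} f)(t,\kappa_t(x)) = Q_x^{(jump)} g(t,x)$.

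The first identity I would obtain by a standard ODE uniqueness argument: both $t\mapsto D\kappa_t(x) b(x)$ and $t\mapsto b(\kappa_t(x))$ solve the linear ODE $y'(t) = -Db(\kappa_t(x))\, y(t)$ with the common initial value $b(x)$, and so they coincide. (Differentiating $\partial_t \kappa_t(x) = -b(\kappa_t(x))$ in $x$ gives the first; applying the chain rule directly to $b(\kappa_t(x))$ gives the second.) For the second identity, the crucial observation is that $\chi_t$ is the inverse of $\kappa_t$, so that
\[
\kappa_t(x) + V_t(\kappa_t(x), z) = \kappa_t\bigl(\chi_t(\kappa_t(x)) + A(\chi_t(\kappa_t(x)))z\bigr) = \kappa_t(x + A(x)z),
\]
hence $f(t,\kappa_t(x) + V_t(\kappa_t(x),z)) = g(t, x + A(x)z)$. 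This handles the jumps pointwise; the compensator pieces in the PV representations have to line up as well, which amounts to checking that $\nabla f(t,\kappa_t(x)) \cdot A_t(\kappa_t(x))\mathbf{e}_k = \nabla_x g(t,x)\cdot A(x)\mathbf{e}_k$. This follows from $A_t(\kappa_t(x)) = D\kappa_t(x)\, A(x)$ together with the gradient composition formula.

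The main obstacle I anticipate is a careful bookkeeping of the principal value: one needs to justify that all integrals split correctly after the substitution and that the compensating linear terms actually cancel in the sense used to define $L_t$ and $Q^{(jump)}$. Since $f \in C_b^{1,(1,\eta_2)}$ with $1+\eta_2 > \beta$, Remark \ref{Eta_bound} together with the integrability bound \eqref{bound_nu} makes each integrand $\nu_k$-integrable near zero, so the PV is in fact an absolutely convergent sum of one-dimensional integrals in each axis, and the manipulations above are rigorous term by term. Substituting the two identities into the expressions for $\partial_t g + Q_x g$ and $D_1 f(t,\kappa_t(x)) + (L_{t,x} f)(t,\kappa_t(x))$ then yields the claim.
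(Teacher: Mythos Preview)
Your proposal is correct and follows essentially the same route as the paper: the jump identity $(L_{t,x}f)(t,\kappa_t(x))=Q_x^{(\mathrm{jump})}g(t,x)$ is obtained exactly as you do, via $\chi_t(\kappa_t(x))=x$, and the drift cancellation comes from the identity $D\kappa_t(x)\,b(x)=b(\kappa_t(x))$, which the paper records as $\partial_t\kappa_t(x)=-D\kappa_t(x)b(x)$ in Lemma~\ref{chi_derivative} (proved there by precisely the ODE uniqueness argument you sketch). Your extra remarks on the compensator alignment and on why the principal values are absolutely convergent are more explicit than what the paper writes, but not needed beyond what you say.
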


\begin{proof}
We have
\begin{eqnarray}
\nonumber
&& (L_{t,x}f)(t,\kappa_t(x))\\
\nonumber
&& = \mathrm{P.V.}\int_{\R^d}\Big(f(t,\kappa_t(x)+V_t(\kappa_t(x),z))-f(t,\kappa_t(x))\Big)\mu(d z)\\
\nonumber
&& = \mathrm{P.V.}\int_{\R^d} \left(f\left(t,\kappa_t(x) + \kappa_t(\chi_t(\kappa_t(x)) +A(\chi_t(\kappa_t(x)))z -\kappa_t(x))\right) - f(t,\kappa_t(x))\right) \mu(d z)\\
\nonumber
&& = \mathrm{P.V.}\int_{\R^d} \left(f(t,\kappa_t(x+A(x)z)) - f(t,\kappa_t(x))\right) \mu(d z)\\
\nonumber
&& = \mathrm{P.V.}\int_{\R^d} \left(g(t,x+A(x)z) - g(t,x)\right) \mu(d z)\\
\label{rep2}
&& = Q_x^{(jump)}g(t,x).
\end{eqnarray}

By Lemma \ref{chi_derivative} we get
\begin{eqnarray*}
Q_xg(t,x)&=& Q_x^{(jump)}g(t,x) + [D\kappa_t(x)]^T D_2 f(t,\kappa_t(x))\cdot b(x)\\
&=& Q_x^{(jump)}g(t,x) + D_2 f(t,\kappa_t(x))\cdot D\kappa_t(x) b(x) \\
&=& Q_x^{(jump)}g(t,x) - D_2 f(t,\kappa_t(x))\cdot \partial_t \kappa_t(x).
\end{eqnarray*}
Also note that
$$
\partial_t g(t,x)= D_1 f(t,\kappa_t(x))+ D_2 f(t,\kappa_t(x))\cdot \partial_t \kappa_t(x).
$$
Applying (\ref{rep2}) we end the proof.
\end{proof}

\begin{remark} We can interpret the statement of the lemma as  follows: by changing the variables
$$
(t,x)\rightsquigarrow (t^*, x^*)=(t, \kappa_t(x)),
$$
we transform the operator $\partial_t+Q$  into the operator $\partial_t+L_t$, where the (time-dependent)  PDO $L_t$, on the contrary to the original PDO $Q$, does not contain a gradient term. This change of variables well corresponds to the classical characteristic method for 1st order PDEs.
\end{remark}

Using Lemma \ref{l210}, one can try  construct a fundamental solution to the parabolic Cauchy problem for the operator $Q$ once such a solution for $L_t$ is available. Namely, the function $u_{t}(x,y)$ given by \eqref{heat} is a natural candidate for solving \eqref{forward}, \eqref{initial} since $p_{t,s}(x,y)$ is the heat kernel corresponding to $\partial_t+L_t$. However, there are two substantial difficulties for using such analytic approach directly in order to identify $u_{t}(x,y)$ as the transition distribution density for \eqref{main}. First, the transition density  $p_{t,s}(x,y)$ constructed in \cite{KKR2021}  is proved to satisfy  Kolmogorov's backward differential equation  only in a certain approximate sense, see \cite[Lemma 3.3]{KKR2021}, and may fail have enough regularity in $x$ for Lemma \ref{l210} to be applicable.  Second, even if one manages to show that the formula \eqref{heat} gives \emph{some} fundamental solution, this is not yet enough to identify the law of the solution to \eqref{main} until  uniqueness of such a solution is proved. We avoid these substantial analytical complications by using stochastic calculus arguments outlined in the Introduction. Namely, under the assumptions of Theorem \ref{main_thm}, we have the following

\begin{lemma}\label{c212} Let $X$ be a weak solution to SDE \eqref{main} and $t>0$ be fixed. Then the It\^o formula is applicable to this process and the function $f(s,x)=\chi_{t-s}(x)$, implying that the process $\chi_{t-s}(X_s), s\in [0,t]$ satisfies SDE \eqref{main1}.
\end{lemma}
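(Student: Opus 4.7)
The plan is to apply the It\^o formula coordinate-wise to the vector-valued function $f(s, x) = \chi_{t-s}(x)$ along the semimartingale $X$, and then to identify the resulting expression with SDE \eqref{main1}. The argument proceeds in three steps.

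\textbf{Regularity of the flow.} Under \textbf{(C)}, standard ODE theory together with Gronwall's lemma yields that $(\tau, x) \mapsto \chi_\tau(x)$ is jointly $C^1$, with $D\chi_\tau(x)$ satisfying the variational equation $\partial_\tau D\chi_\tau(x) = Db(\chi_\tau(x))\, D\chi_\tau(x)$, $D\chi_0(x) = I$. Since $Db$ is $\eta_2$-H\"older, $D\chi_\tau(\cdot)$ is itself $\eta_2$-H\"older with bounds uniform for $\tau$ in a compact interval; in particular $f(s,\cdot) \in C^{1,\eta_2}_b(\R^d)$ uniformly in $s\in[0,t]$. Combined with \eqref{bound_nu} and $1+\eta_2 > \beta$, this gives the integrability bound
\begin{equation*}
\int_{|z|\le 1} \bigl|\chi_{t-s}(x + A(x)z) - \chi_{t-s}(x) - D\chi_{t-s}(x)\, A(x)z\bigr|\, \mu(dz) \le c \int_{|z|\le 1} |z|^{1+\eta_2}\,\mu(dz) < \infty,
\end{equation*}
which is exactly what is needed for the It\^o correction term to be well defined as a compensated Poisson integral.

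\textbf{Flow cancellation identity.} Differentiating the group property $\chi_{\tau+\sigma}(x) = \chi_\tau(\chi_\sigma(x))$ in $\sigma$ at $\sigma=0$ yields
\begin{equation*}
b(\chi_\tau(x)) = \partial_\tau \chi_\tau(x) = D\chi_\tau(x)\, b(x).
\end{equation*}
Hence the continuous-variation contribution to the It\^o formula cancels the explicit time derivative of $f$:
\begin{equation*}
\partial_s f(s, x) + D\chi_{t-s}(x)\, b(x) = -b(\chi_{t-s}(x)) + b(\chi_{t-s}(x)) = 0.
\end{equation*}

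\textbf{It\^o expansion and identification.} Applying the It\^o formula coordinate-wise, where $X$ has continuous-variation part $b(X_r)\,dr$ and jump part $A(X_{r-})\,dZ_r$, one obtains
\begin{align*}
f(s, X_s) = {}& f(0, X_0) + \int_0^s \bigl[\partial_r f(r, X_{r-}) + D\chi_{t-r}(X_{r-})\, b(X_r)\bigr]\, dr \\
&{} + \int_0^s\!\!\int_{\R^d} \bigl[\chi_{t-r}(X_{r-} + A(X_{r-})z) - \chi_{t-r}(X_{r-})\bigr]\, N(dr,dz),
\end{align*}
with the Poisson integral interpreted in the compensated sense where necessary. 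By the preceding step the $dr$-integral vanishes identically. For the Poisson integral, the substitution $X_{r-} = \kappa_{t-r}(X^t_{r-})$ combined with the definition of $V^t_r(x,z)=\chi_{t-r}(\kappa_{t-r}(x)+A(\kappa_{t-r}(x))z)-x$ turns the integrand into $V^t_r(X^t_{r-}, z)$, while $f(0,X_0) = \chi_t(x) = X^t_0$. This is precisely SDE \eqref{main1}.

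\textbf{Main obstacle.} The only genuinely delicate point is the bookkeeping of the jump compensation when $Z$ has infinite activity and possibly $\beta \ge 1$, in which case the linear part $D\chi_{t-r}\, A\, z$ is not absolutely $\mu$-integrable near the origin. This is handled in the standard way by splitting into small and large jumps, using the compensated integral on $\{|z|\le 1\}$, and exploiting the symmetry of each $\nu_k$ together with the $|z|^{1+\eta_2}$-estimate from the first step; the compensator corrections then recombine into the formal expression appearing in \eqref{main1}.
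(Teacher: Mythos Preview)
Your approach is the same as the paper's --- flow regularity, the cancellation $\partial_s\chi_{t-s}(x) + D\chi_{t-s}(x)\,b(x) = 0$, then It\^o --- and your derivation of the cancellation identity via the group property is arguably cleaner than the paper's ODE-uniqueness argument in Lemma~\ref{chi_derivative}.

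There is, however, one genuine technical point that you gloss over and that constitutes most of the paper's actual proof. You write ``Applying the It\^o formula coordinate-wise'' as if this were automatic, but the standard It\^o formula requires $C^{1,2}$ regularity in $x$, while under assumption \textbf{(C)} the flow $\chi_\tau(\cdot)$ is only $C^{1,\eta_2}$. Your ``Main obstacle'' paragraph addresses the compensation bookkeeping but not this regularity deficit. The paper deals with this by first writing the It\^o formula \eqref{Ito} for $C^{1,2}$ functions, then approximating $f$ by Gaussian convolutions $f_n \in C^{1,2}$ with uniformly controlled $\rho^{[0,t]}_{\eta_2,1}(f_n^k)$, and passing to the limit term by term --- the $|z|^{1+\eta_2}$ estimate you already have is exactly what makes the dominated-convergence argument for the compensator term go through. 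All the ingredients are in your write-up; what is missing is the explicit statement that the It\^o formula for this wider function class is not off-the-shelf and must be obtained by approximation. A second, smaller imprecision: writing the jump part as a single integral against $N(dr,dz)$ over all of $\R^d$ ``interpreted in the compensated sense where necessary'' is informal; the paper's rigorous version is the three-term decomposition \eqref{main1_bis} in Remark~\ref{rem_c212}, and matching your It\^o output to that form is where the symmetry of $\mu$ (to kill $\mathrm{P.V.}\int_{|z|\le 1} A_{t-s}z\,\mu(dz)$) actually gets used.
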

\begin{remark}
\label{rem_c212}  SDE \eqref{main1} requires a certain care to be properly interpreted, since the term $V^t_s(X_{s-}^t,z)$ may fail to be absolutely integrable on the set $\{|z|\leq 1\}$. We recall that $\mu(dz)$ is symmetric, hence
$$
\mathrm{P.V.}\int_{|z|\leq 1}A_{t-s}(X_{s-}^t) z\mu(dz)=0.
$$
Using this, we write \eqref{main1} in the following more cumbersome but completely rigorous form, which will be actually proved below:
\begin{equation}\label{main1_bis}
\begin{aligned}
 d X_s^t&=\int_{|z|\leq 1} V^t_s(X_{s-}^t,z)\,\widetilde{N}(ds, dz)
 \\&+\int_{|z|\leq 1} \Big(V^t_s(X_{s-}^t,z)-A_{t-s}(X_{s-}^t) z\Big) \, ds\mu(dz)
 \\&+\int_{|z|> 1} V^t_s(X_{s-}^t,z)\,N(ds, dz), \hspace*{2cm} s\in [0,t], \quad X_0^t=\chi_t(x),
 \end{aligned}
\end{equation}
where  $\widetilde{N}$ is the compensated random measure corresponding to $N$.
\end{remark}
\begin{proof}[Proof of Lemma \ref{c212}] First, we recall that by the standard It\^o formula for any $\R^d$-valued function $f(s,x)=\left(f^1(s,x),\dots,f^d(s,x)\right)$ such that all its coordinates are from the class $C^{1,2}([0,t]\times \R^d)$, bounded with their derivatives, the process
$$
Y_s=f(s, X_s), \quad s\in [0,t]
$$
satisfies
\begin{equation}\label{Ito}\begin{aligned}
  Y_s&-Y_0=\int_0^s\Big[ (D_1f)(r, X_r)+  {(D_2f)(r, X_r)b(X_r)}\Big]\, dr
  \\&+\int_0^s\int_{|z|\leq 1}\Big[f(r, X_{r-}+A( X_{r-})z)-f(r, X_{r-})\Big]\widetilde{N}(dr,dz)
  \\&+
  \int_0^s\int_{|z|>1}\Big[f(r, X_{r-}+A( X_{r-})z)-f(r, X_{r-})\Big]{N}(dr,dz)
  \\&+ \int_0^s\int_{|z|\leq 1}\Big[f(r, X_{r}+A( X_{r})z)-f(r, X_{r})-{(D_2f)(r, X_r)A(X_r)z}\Big]\,\mu(dz)\, dr,
 \end{aligned}
\end{equation}
where  $(D_2f)(r, x)= [(D_2f^1)(r, x), \dots, (D_2f^d)(r, x)]^T$ for  $r\in \R, x\in\R^d$.

Here we have used that, by the symmetry of the L\'evy measure $\mu(dz)$,  for any $ x\in \R^d,  r\in [0,t]$,
$$
\begin{aligned}
\mathrm{P.V.}\int_{|z|\leq 1}&\Big[f(r, x+A(x)z)-f(r, x)\Big]\,\mu(dz)
\\&=\int_{|z|\leq 1}\Big[f(r, x+A(x)z)-f(r, x)-{(D_2f)(r, X_r)A(x)z}\Big]\,\mu(dz).
\end{aligned}
$$
Next, we observe that the formula \eqref{Ito} remains true for any
$f=\left(f^1,\dots,f^d\right)$ such that $f^k \in C_b^{1,(1,\eta_2)}([0,t]\times \R^d),\ k=1,\dots,d$. Indeed, we can approximate $f$ in a standard way, e.g. by taking  convolutions in spatial variable Gaussian kernels, by a sequence of functions $f_n=\left(f_n^1,\dots,f_n^d\right)$ with $f^k_n\in C^{1,2}([0,t]\times \R^d), \ k=1,\dots,d, n\geq 1$ such that
$$
\sup_{n\geq 1}\rho^{[0,t]}_{\eta_2,1}(f^k_n)<\infty, \ k=1,\dots,d
$$
and $f^k, D_1f^k, D_2f^k$ are approximated uniformly on compact sets by $f^k_n, D_1f^k_n, D_2f^k_n,$ respectively. The formula \eqref{Ito} holds true for every $f_n, n\geq 1$, and both sides of this formula converge in probability as $n\to \infty$ to the corresponding terms in \eqref{Ito} for $f$. For the LHS and the first (deterministic) integral in the RHS this follows easily from the uniform convergence of $f^k_n, D_1f^k_n, D_2f^k_n$ on compacts. For the second (stochastic) integral this follows by the usual It\^o isometry arguments, convergence $f_n\to f$ and uniform bound on the increments $|f(s, x)-f(s,y)|\leq C|x-y|$, valid on every compact set. The third integral in the RHS is just an a.s. finite sum over the set of large jumps for the process $Z$, and thus the required converges follows simply from  $f_n\to f$. Finally, for the fourth integral the required convergence follows by the dominated convergence theorem because, by \eqref{eta_bound}, for $n\geq 1$, we have
$$
\Big|f_n(r, x+A(x)z)-f_n(r, x)-{(D_2f)(r, X_r)A(x)z}\Big|\leq C|z|^{1+\eta_2}, \quad  r\in [0,t],x\in \R^d, |z|\leq 1,
$$
and, by (\ref{bound_nu}),
$$
\int_{|z|\leq 1}|z|^{1+\eta_2}\mu(dz)<\infty
$$
since $1+\eta_2>\beta$.

To conclude the proof, we simply take $f(s, x)=\chi_{t-s}(x)$. By Lemma \ref{chi_derivative} and Remark \ref{kappa_chi} below, for any $t>0$ this is a function with all coordinates   of the class  $C_b^{1,(1,\eta_2)}([0,t]\times \R^d)$ which satisfies
$$
D_1f(s, x)=-\partial_r\chi_{r}(x)\Big|_{r=t-s}=-D\chi_{r}(x)b(x)\Big|_{r=t-s}=-D_2f(s, x)b(x).
$$
Thus the formula \eqref{Ito} holds true for this function, and the first (deterministic) integral at its RHS equals zero. The 2nd, 3rd, and 4th integrals at the RHS of \eqref{Ito} coincide with the 2nd, 4th and 3rd terms at the RHS of \eqref{main1_bis} respectively, integrated in $s$.
\end{proof}

\begin{corollary}\label{c213}
The SDE (\ref{main}) has a unique weak solution $X$. This solution is a Markov process with the transition probability density $u_{t}(x,y)$ which satisfies \eqref{heat}.
\end{corollary}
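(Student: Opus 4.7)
The plan is to transfer weak well-posedness between the drifted SDE \eqref{main} and the driftless, time-inhomogeneous SDE \eqref{main2}, using Lemma \ref{c212} as the bridge. Since \eqref{main2} is already known from \cite{KKR2021} to be weakly well posed with transition density $p_{t,s}(x,y)$, both uniqueness and existence for \eqref{main} should fall out once the translation is in place.

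I would handle weak uniqueness first. Suppose $X,Y$ are two weak solutions of \eqref{main} with $X_0=Y_0=x$, and fix $t>0$. By Lemma \ref{c212}, the processes $\chi_{t-s}(X_s)$ and $\chi_{t-s}(Y_s)$, $s\in[0,t]$, both satisfy \eqref{main1} with the common initial condition $\chi_t(x)$; after the time shift $s\mapsto s-t$ they become weak solutions of \eqref{main2} on $[-t,0]$, which is weakly unique by \cite{KKR2021}. Since $(X_s)_{s\in[0,t]}\mapsto(\chi_{t-s}(X_s))_{s\in[0,t]}$ is a measurable bijection on c\`adl\`ag paths, pulling back yields $\mathrm{Law}(X)=\mathrm{Law}(Y)$ on $[0,t]$, and $t$ being arbitrary, on $[0,\infty)$.

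For existence I would invert the same transformation. For each $t>0$ let $X^{*,t}$ denote the unique weak solution of \eqref{main2} on $[-t,0]$ starting from $X^{*,t}_{-t}=\chi_t(x)$, and set $X^{(t)}_s:=\kappa_{t-s}(X^{*,t}_{s-t})$, $s\in[0,t]$. Running It\^o's formula in the direction opposite to Lemma \ref{c212}, applied to $f(s,y)=\kappa_{t-s}(y)$, should recover \eqref{main} for $X^{(t)}$: the drift $b(X^{(t)}_s)\,ds$ re-emerges from the purely deterministic contribution via the flow identity $\partial_s\kappa_{t-s}(y)=b(\kappa_{t-s}(y))$, while the jump integral, after the P.V./compensator cancellation described in Remark \ref{rem_c212}, collapses to $A(X^{(t)}_{s-})\,dZ_s$. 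By the uniqueness just proved, the laws of the $X^{(t)}$ for different terminal times are consistent, and Kolmogorov's extension theorem produces a single weak solution $X$ of \eqref{main} on $[0,\infty)$. Time-homogeneity of the coefficients combined with weak well-posedness from every starting point then implies, by a standard argument, that $X$ is a time-homogeneous Markov process; and since $X_t=X^{*,t}_0$ conditionally on $X^{*,t}_{-t}=\chi_t(x)$ has density $p_{-t,0}(\chi_t(x),\cdot)$, formula \eqref{heat} follows.

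The hard part will be the existence step, specifically the inverse It\^o calculation: one must verify that the deterministic part of It\^o's formula produces precisely the drift $b$ (with the correct sign, coming from $\partial_t\kappa_t(x)=-b(\kappa_t(x))$), and that the P.V.\ and compensator bookkeeping for small jumps in \eqref{main1_bis} dualizes consistently so that the recovered equation is \eqref{main} in its rigorous sense. Uniqueness, in contrast, is essentially an immediate corollary of Lemma \ref{c212} together with the weak well-posedness of \eqref{main2} from \cite{KKR2021}.
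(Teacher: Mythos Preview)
Your proposal is correct and follows essentially the same route as the paper: transfer weak uniqueness from \eqref{main2} to \eqref{main} via Lemma \ref{c212} and the bijectivity of the flow, and read off \eqref{heat} from $X_t=X_0^*$. The only difference is emphasis on the existence step: the paper disposes of it in one line, noting that existence ``can be either derived from the similar result for \eqref{main2}, or proved directly by standard weak compactness arguments,'' and omits the details. Your planned inverse It\^o calculation is exactly the first option; it is no harder than Lemma \ref{c212} itself---one simply applies the It\^o formula of that proof with $f(s,y)=\kappa_{-s}(y)=\chi_s(y)$ in place of $\chi_{t-s}(y)$, and the flow identities in Lemma \ref{chi_derivative}/Remark \ref{kappa_chi} work symmetrically---so your assessment that this is the ``hard part'' is too pessimistic. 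If you prefer to avoid even that, the weak compactness alternative is available and standard.
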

\begin{proof}
  By Lemma \ref{c212}, for any weak solution  $X_s, s\in [0,t]$ to \eqref{main} with $X_0=x$, the process
  $$
  X_s^*=\chi_{-s}(X_{s+t})=\kappa_s(X_{s+t}), \quad s\in [-t,0]
  $$
  is a weak solution to SDE \eqref{main2} with $X_{-t}^*=\chi_t(x)$. For the latter equation the weak uniqueness follows by \cite[Theorem 2.1]{KKR2021}; we show in Lemma \ref{assumptions} below that all the assumptions of this theorem are satisfied, see also Remark \ref{rem_assumptions}. This gives weak uniqueness for \eqref{main}.

  Existence of a weak solution to \eqref{main} can be either derived from the similar result for \eqref{main2}, or proved directly by standard weak compactness arguments; we omit the details.

  Finally, we have
  $$
   X_0^*=\kappa_0(X_{t})=X_{t} \Longrightarrow \mathrm{Law}\, (X_t|X_{0}=x)=\mathrm{Law}\, (X_0^*|X_{-t}^*=\chi_t(x)),
  $$
  which proves \eqref{heat}.
\end{proof}

\subsection{Properties of the flow.} \label{flow}

In this Section we prove some properties $\kappa_s$ and $\chi_s$ needed in the sequel. Such properties seem to be standard in the theory of ODE, but we present them here for the convenience of the reader.

\begin{lemma}
\label{chi_derivative}
If asumptions \textbf{(C)} are satisfied
 then for any $t, s \in \R$, $x, y \in \R^d$,
 \begin{eqnarray}
\label{kappa_tsx}
|\kappa_t(x) - \kappa_s(x)| &\le& c |t - s| e^{c (|t| \vee |s|)} |x|,\\
\label{k_estimate}
|\kappa_t(x) -\kappa_t(y)| &\le& c |x-y|e^{c|t|},\\
\label{D_kappa_1}
|D\kappa_t(x) | &\le& ce^{c|t|},\\
\label{D_estimate}
|D\kappa_t(x) -D\kappa_t(y)| &\le& c |x-y|^{\eta_2}e^{c|t|}
\end{eqnarray}
and

 \begin{eqnarray}
\label{derivative_representation}
\partial_t \kappa_t(x)&=&-D\kappa_t(x)b(x).
\end{eqnarray}
\end{lemma}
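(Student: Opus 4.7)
The plan is to establish the four estimates and the final identity by standard Grönwall bootstrap arguments applied to the integral form of the defining ODE, and to read off the final identity from the one-parameter group property of the flow.

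First, I would write
\[
\kappa_t(x)-\kappa_t(y)=(x-y)-\int_0^t\bigl[b(\kappa_s(x))-b(\kappa_s(y))\bigr]\,ds
\]
for $t\ge 0$ (with the symmetric formula for $t<0$, or equivalently working with $\chi_t=\kappa_{-t}$). The Lipschitz bound $|b(u)-b(v)|\le C_6|u-v|$ coming from \eqref{b_grad_est}, combined with Grönwall's lemma, yields \eqref{k_estimate}. For \eqref{kappa_tsx} I would use the integral identity $\kappa_t(x)-\kappa_s(x)=-\int_s^t b(\kappa_r(x))\,dr$, bound $|b(\kappa_r(x))|\le|b(0)|+C_6|\kappa_r(x)|$, and use the linear growth estimate $|\kappa_r(x)|\le c\,e^{c|r|}|x|$ (obtained by another Grönwall bound on the same integral representation) to conclude, with all additive constants absorbed into $c$.

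Next, for \eqref{D_kappa_1} and \eqref{D_estimate} I would work with the variational equation
\[
\partial_t D\kappa_t(x)=-Db(\kappa_t(x))\,D\kappa_t(x),\qquad D\kappa_0(x)=I,
\]
whose validity follows from the classical theorem on $C^1$-dependence of ODE flows on initial data (applicable since $b\in C^1$). The bound $|Db|\le C_6$ together with Grönwall immediately gives \eqref{D_kappa_1}. For \eqref{D_estimate} I would subtract the integral forms at $x$ and $y$, add and subtract $Db(\kappa_s(x))D\kappa_s(y)$, and split the result as
\[
\int_0^t\bigl[Db(\kappa_s(y))-Db(\kappa_s(x))\bigr]D\kappa_s(y)\,ds+\int_0^t Db(\kappa_s(x))\bigl[D\kappa_s(y)-D\kappa_s(x)\bigr]\,ds.
\]
The first integrand is controlled using \eqref{b_estimate} together with \eqref{k_estimate} and \eqref{D_kappa_1}, producing a term of order $|x-y|^{\eta_2}e^{c|t|}$; the second is absorbed by a final Grönwall step driven by the bound $|Db|\le C_6$.

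Finally, for \eqref{derivative_representation} I would exploit the one-parameter group property $\chi_{t+s}=\chi_t\circ\chi_s$: differentiating in $s$ at $s=0$ yields the pointwise identity $D\chi_t(x)\,b(x)=b(\chi_t(x))$. Substituting $\kappa_t(x)$ in place of $x$, using $\chi_t\circ\kappa_t=\mathrm{id}$ together with its differential consequence $D\chi_t(\kappa_t(x))\,D\kappa_t(x)=I$, I obtain $b(\kappa_t(x))=D\kappa_t(x)\,b(x)$. Combining this with $\partial_t\kappa_t(x)=-b(\kappa_t(x))$, which is just the definition of the flow, gives exactly \eqref{derivative_representation}. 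The only mildly delicate point in the whole argument is justifying the formal differentiation of $\kappa_t(x)$ in $x$ that produces the variational equation above, together with the symmetric handling of $t<0$; both are standard under assumption \textbf{(C)}, so the remainder is bookkeeping within the Grönwall framework.
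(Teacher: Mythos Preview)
Your argument is correct and, for the four estimates, follows the same Gr\"onwall bootstrap as the paper. The one genuine difference is in \eqref{derivative_representation}: the paper shows that $b(\kappa_t(x))$ and $D\kappa_t(x)\,b(x)$ both solve the linear ODE $\dot z(t)=-Db(\kappa_t(x))\,z(t)$ with the same initial value $z(0)=b(x)$, and concludes by uniqueness of solutions; you instead differentiate the group law $\chi_{t+s}=\chi_t\circ\chi_s$ at $s=0$ and then invert via $D\chi_t(\kappa_t(x))\,D\kappa_t(x)=I$. Both routes are short and standard; yours is more geometric and avoids writing out the variational ODE for this step, while the paper's is more self-contained in that it stays entirely within the integral/ODE framework already set up and does not invoke the flow group property. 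Note also a slight difference in order: the paper proves \eqref{derivative_representation} first and then derives \eqref{kappa_tsx} from it together with \eqref{D_kappa_1}, whereas you bound $|b(\kappa_r(x))|$ directly.

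One minor caveat on \eqref{kappa_tsx}: the additive term $|b(0)|\,|t-s|$ cannot literally be ``absorbed into $c$'' in a bound proportional to $|x|$, since the right-hand side vanishes at $x=0$. The paper makes the same tacit simplification when it writes $|b(x)|\le c|x|$; strictly speaking both arguments require $b(0)=0$, or else the estimate should carry an extra additive $c|t-s|$. This has no effect on how the lemma is used later.
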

\begin{remark}
\label{kappa_chi}
 The lemma is true if we replace $\kappa_t(x)$ by $\chi_t(x)$ and $b(x)$ by $-b(x)$.

\end{remark}

\begin{proof} It is clear that it is enough to prove it for $t, s \ge 0$.
Note that
\begin{equation}
\label{kappa_t}
\kappa_{t}(x)= x - \int_0^t\ b(\kappa_{s}(x))ds.
\end{equation}
We observe that from the general theory $D\kappa_t(x)$ exists for any $x\in \R^d$. We have
\begin{eqnarray*}
|\kappa_{t}(x)-\kappa_{t}(y)|&=& \left|x -y -\int_0^t\ (b(\kappa_{s}(x))- b(\kappa_{s}(y)))ds\right|\\
&\le & |x -y| +\int_0^t\ |b(\kappa_{s}(x))- b(\kappa_{s}(y))|ds\\
&\le & |x -y| +C_7\int_0^t\ |\kappa_{s}(x)- \kappa_{s}(y)|ds.
\end{eqnarray*}
 Next we use Gronwall Lemma to get  (\ref{k_estimate}). As a consequence of this and the existence of $D\kappa_t(x)$ we obtain
\begin{equation}
\label{D_kappa}
|D\kappa_t(x) | \le ce^{ct}.
\end{equation}

Applying $D$ to the equality (\ref{kappa_t}),
then using  (\ref{D_kappa}) and (\ref{b_grad_est}) we have

\begin{equation}
\label{D_kappa1}
D\kappa_{t}(x)= I  -\int_0^t\  D b(\kappa_{s}(x))D \kappa_{s}(x)ds,
\end{equation}
where $I$ is identity matrix. This representation, the estimate (\ref{k_estimate}) and the arguments based on Gronwall Lemma leads to (\ref{D_estimate}).

Using the continuity of the function $s\mapsto  D b(\kappa_{s}(x))D \kappa_{s}(x)$ we have the following matrix differential equation for $y_t(x) = D\kappa_{t}(x)$:
\begin{equation}
\label{diff}
\frac{\partial}{\partial t} D\kappa_{t}(x)= -  D b(\kappa_{t}(x))D \kappa_{t}(x), \quad  y_0(x) = D\kappa_{0}(x)=I,
\end{equation}
 which can be written, denoting $J_t(x) = -D b(\kappa_{t}(x))$, as
$$\frac{\partial}{\partial t} y_t(x)= J_t(x)  y_t(x)  ,\quad  y_0(x) =I. $$
Now, apply to both sides  the vector  $b(x)$ to obtain
\begin{equation*}
\frac{\partial}{\partial t}  y_t(x)b(x)= J_t(x) y_t(x)b(x) ,\quad  y_0(x)b(x)= b(x).
\end{equation*}
On the other hand
\begin{eqnarray*}
\frac{\partial}{\partial t} b(\kappa_{t}(x)) =  D b(\kappa_{t}(x)) \frac{\partial}{\partial t}
\kappa_{t}(x)=
 J_t(x) b(\kappa_{t}(x)), \quad
  b(\kappa_{0}(x))=b(x).
\end{eqnarray*}
Finally, we observe  that  $z_1(t)=b(\kappa_{t}(x))$ and $z_2(t)=y_t(x)b(x) $ satisfy the same differentia equations with the same initial condition, hence they are equal. Therefore we have
$$\partial_t \kappa_{t}(x)= -b(\kappa_{t}(x))=-y_t(x) b(x)=  -D \kappa_{t}(x)b(x), $$
which ends the proof \eqref{derivative_representation}.

By (\ref{b_grad_est}) for any $x \in \R^d$ we have $|b(x)| \le c |x|$. Using this, (\ref{D_kappa_1}) and \eqref{derivative_representation} we get (\ref{kappa_tsx}).
\end{proof}

\begin{lemma}
If asumptions \textbf{(C)} are satisfied then there is $c$ such that for any $t\in \R$ and $x, y \in \R^d$ we have
\begin{equation}\label{A_estimate}
|A_t(x) - A_t(y)|\le c e^{c|t|}|x-y|^{\eta_1\wedge \eta_2}.
\end{equation}
\end{lemma}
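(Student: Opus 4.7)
The plan is to decompose $A_t(x) - A_t(y)$ additively so that each summand contains a single difference to which an already proved estimate applies. Namely, write
$$
A_t(x) - A_t(y) = D\kappa_t(\chi_t(x))\bigl[A(\chi_t(x)) - A(\chi_t(y))\bigr] + \bigl[D\kappa_t(\chi_t(x)) - D\kappa_t(\chi_t(y))\bigr] A(\chi_t(y)).
$$
For the first summand, the bound on the Jacobian $|D\kappa_t(\chi_t(x))| \le c e^{c|t|}$ from \eqref{D_kappa_1} combined with the H\"older estimate \eqref{Holder} for $A$ yields a factor $|\chi_t(x) - \chi_t(y)|^{\eta_1}$. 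For the second summand, boundedness of $A$ from \eqref{bounded} together with the H\"older estimate \eqref{D_estimate} for $D\kappa_t$ yields a factor $|\chi_t(x) - \chi_t(y)|^{\eta_2}$.

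Next I would push the $\chi_t$ out of those H\"older factors by invoking the analogue of \eqref{k_estimate} for the flow $\chi_t$, guaranteed by Remark \ref{kappa_chi}: $|\chi_t(x) - \chi_t(y)| \le c e^{c|t|}|x-y|$. This converts both contributions into bounds of the form $c e^{c|t|}|x-y|^{\eta_i}$, $i=1,2$, with a possibly enlarged constant $c$.

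Finally, I would reconcile the two different H\"older exponents. For $|x-y| \le 1$ we have trivially $|x-y|^{\eta_i} \le |x-y|^{\eta_1 \wedge \eta_2}$, so the two summands combine into a single bound of the desired form. For $|x-y| \ge 1$ the left-hand side is itself bounded by $2\,|D\kappa_t|_\infty\,|A|_\infty \le c e^{c|t|}$ using \eqref{bounded} and \eqref{D_kappa_1}, and this is trivially dominated by $c e^{c|t|}|x-y|^{\eta_1 \wedge \eta_2}$. Adjusting $c$ to absorb all the constants yields \eqref{A_estimate}.

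I do not expect any serious obstacle: the whole argument is a standard add-and-subtract device resting on previously established flow estimates. The only mildly delicate point is keeping track of the exponential factor $e^{c|t|}$, which enters through \emph{each} of the flow bounds \eqref{k_estimate}, \eqref{D_kappa_1} and \eqref{D_estimate}, but since those factors are multiplied (not iterated inside an integral), they combine to give a single exponential of the same qualitative form, as required.
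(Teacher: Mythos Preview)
Your proposal is correct and follows essentially the same route as the paper: the same add-and-subtract decomposition of $A_t(x)-A_t(y)$, the same application of \eqref{D_kappa_1}, \eqref{D_estimate}, \eqref{Holder}, \eqref{bounded} and the Lipschitz flow estimate for $\chi_t$, leading to $c e^{c|t|}(|x-y|^{\eta_1}+|x-y|^{\eta_2})$. The paper is terser and simply writes ``we get \eqref{A_estimate}'' at the last step, whereas you spell out the case split $|x-y|\le 1$ versus $|x-y|\ge 1$ to pass from the sum of the two H\"older terms to the single exponent $\eta_1\wedge\eta_2$; this extra detail is fine and changes nothing substantive.
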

\begin{proof} We recall that
$$A_t(x) = D \kappa_{t}(\chi_{t}(x)) A(\chi_{t}(x)).$$
Applying (\ref{D_estimate}) and (\ref{k_estimate}) we get
$$|D \kappa_{t}(\chi_{t}(x))- D \kappa_{t}(\chi_{t}(y))|\le c e^{c|t|}|x-y|^{ \eta_2}.$$
By (\ref{Holder}) and (\ref{k_estimate}),
$$|A(\chi_{t}(x))- A(\chi_{t}(y))|\le c e^{c|t|}|x-y|^{ \eta_1}.$$
Since
 $$\sup_{x\in \R^d}\left(|D \kappa_{t}(x)| + |A(x)|\right)\le c e^{c|t|}$$
we get (\ref{A_estimate}).
\end{proof}

\begin{lemma}
\label{CD}
If conditions \textbf{(C)} and \textbf{(D)} hold then condition \textbf{(D')} is satisfied, with a constant $C_9$ depending only on $d$, $\eta_1$, $\eta_2$, $C_1, \ldots, C_8$.
\end{lemma}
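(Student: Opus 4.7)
The plan is to decompose $A_t(x)-A_s(x)$ into three pieces by inserting the intermediate terms $(D\kappa_t)(\chi_s(x))A(\chi_t(x))$ and $(D\kappa_s)(\chi_s(x))A(\chi_t(x))$. This yields
\[
A_t(x)-A_s(x) = \mathrm{I} + \mathrm{II} + \mathrm{III},
\]
with $\mathrm{I} = [(D\kappa_t)(\chi_t(x))-(D\kappa_t)(\chi_s(x))]A(\chi_t(x))$, $\mathrm{II} = [(D\kappa_t)(\chi_s(x))-(D\kappa_s)(\chi_s(x))]A(\chi_t(x))$, and $\mathrm{III} = (D\kappa_s)(\chi_s(x))[A(\chi_t(x))-A(\chi_s(x))]$; I would then bound each piece separately.

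The only essential use of assumption \textbf{(D)} is to upgrade \eqref{kappa_tsx}, which carries an uncontrolled factor of $|x|$, to the uniform-in-$x$ bound $|\chi_t(x)-\chi_s(x)|\le C_8|t-s|$: this follows by integrating the defining ODE $\partial_r\chi_r(x)=b(\chi_r(x))$ and using $|b|\le C_8$. The remaining ingredients are already contained in \textbf{(C)}: the spatial bounds \eqref{bounded}, \eqref{Holder} for $A$, the bounds \eqref{D_kappa_1}, \eqref{D_estimate} for $D\kappa_t$, and the matrix ODE \eqref{diff}. The ODE, combined with $|Db|\le C_6$ and \eqref{D_kappa_1}, gives $|\partial_r (D\kappa_r)(y)|\le c e^{c|r|}$ uniformly in $y$, and hence
\[
|(D\kappa_t)(y)-(D\kappa_s)(y)| \le c|t-s|e^{c(|t|\vee|s|)}.
\]

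For $|t-s|\le 1$ I would assemble the pieces as follows. Piece $\mathrm{I}$ is controlled by $c e^{c|t|}|\chi_t(x)-\chi_s(x)|^{\eta_2}\cdot C_3\le c e^{c|t|}|t-s|^{\eta_2}$ via \eqref{D_estimate} and \eqref{bounded}. Piece $\mathrm{II}$ is at most $c|t-s|e^{c(|t|\vee|s|)}\cdot C_3$, which is dominated by $c e^{c(|t|\vee|s|)}|t-s|^{\eta_1\wedge\eta_2}$ since $|t-s|\le 1$ and $\eta_1\wedge\eta_2\le 1$. Piece $\mathrm{III}$ is at most $c e^{c|s|}\cdot C_5|\chi_t(x)-\chi_s(x)|^{\eta_1}\le c e^{c|s|}|t-s|^{\eta_1}$ by \eqref{D_kappa_1} and \eqref{Holder}. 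Because $|t-s|\le 1$, each of $|t-s|^{\eta_1}$ and $|t-s|^{\eta_2}$ is bounded by $|t-s|^{\eta_1\wedge\eta_2}$, so summing the three pieces yields $|A_t(x)-A_s(x)|\le c e^{c(|t|\vee|s|)}|t-s|^{\eta_1\wedge\eta_2}$. For $|t-s|>1$ the trivial bound $|A_t(x)|\le |D\kappa_t(\chi_t(x))|\cdot|A(\chi_t(x))|\le c e^{c|t|}\cdot C_3$ (and similarly for $s$), together with $|t-s|^{\eta_1\wedge\eta_2}\ge 1$, is enough.

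The only real task is bookkeeping: tracking the constants through the three estimates to verify that the final $C_9$ depends only on $d,\eta_1,\eta_2,C_1,\ldots,C_8$. This is straightforward because every constant $c$ appearing along the way has that dependence. There is no genuinely hard analytic step; the lemma is essentially a triangle-inequality argument built on top of the flow estimates from Lemma \ref{chi_derivative} and the ODE \eqref{diff}.
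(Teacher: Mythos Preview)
Your proposal is correct and follows essentially the same route as the paper's proof: both use the boundedness of $b$ to get $|\chi_t(x)-\chi_s(x)|\le C_8|t-s|$, integrate the matrix ODE for $D\kappa_r$ to get $|D\kappa_t(y)-D\kappa_s(y)|\le c e^{c(|t|\vee|s|)}|t-s|$, and then combine these with \eqref{D_estimate}, \eqref{Holder}, \eqref{bounded}, \eqref{D_kappa_1} via the triangle inequality. The paper groups the terms as $[D\kappa_t(\chi_t(x))-D\kappa_s(\chi_s(x))]A(\chi_t(x))+D\kappa_s(\chi_s(x))[A(\chi_t(x))-A(\chi_s(x))]$ and then splits the first bracket in two, which is exactly your $\mathrm{I}+\mathrm{II}+\mathrm{III}$; your explicit treatment of the case $|t-s|>1$ is a minor addition the paper leaves implicit.
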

\begin{proof}
By (\ref{kappa_t}) we obtain
$$
\kappa_{t}(x)- \kappa_{s}(x)=- \int_s^t b(\kappa_{u}(x))  du
$$
and then
$$
D\kappa_{t}(x)- D\kappa_{s}(x)=- \int_s^t   Db(\kappa_{u}(x))D\kappa_u(x) \, du.
$$
Since
$$
| D\kappa_u(x) |\le  c e^{c|u|}
$$
we arrive at
$$
|\kappa_{t}(x)- \kappa_{s}(x)|\le || b||_\infty|t-s| $$
and
$$|D \kappa_{t}(x)- D \kappa_{s}(x)|\le c e^{c(|s|\vee |t|)}|t-s|.$$
These yield
\begin{eqnarray*}&& |D \kappa_{t}(\chi_{t}(x))- D \kappa_{s}(\chi_{s}(x))|\\
&\le& |D \kappa_{t}(\chi_{t}(x))- D \kappa_{t}(\chi_{s}(x))|+ |D \kappa_{t}(\chi_{s}(x))- D \kappa_{s}(\chi_{s}(x))|\\
&\le& c e^{c(|s|\vee |t|)} |\kappa_{-t}(x)- \kappa_{-s}(x)|^{\eta_2} +  c e^{c(|s|\vee |t|)}|t-s|\\
&\le& c e^{c(|s|\vee |t|)}[||b||^{\eta_2}_\infty|t-s|^{\eta_2} + |t-s|].
 \end{eqnarray*}
Moreover,
$$|A(\chi_{t}(x))- A(\chi_{s}(x))|\le C_5|\chi_{t}(x)-\chi_{s}(x)|^{ \eta_1}\le  C_5||b||^{\eta_1}_\infty|t-s|^{ \eta_1}.$$
Hence we obtain (\ref{D11}).
\end{proof}


Let $a_{t,i,j}(x)$ be elements of $A_t(x)$ (where $t \in \R$, $i, j \in \{1,\ldots,d\}$).

\begin{lemma}
\label{assumptions}
Put $\gamma_1 = \gamma_2 = \min(\eta_1,\eta_2)$, $\gamma_3 = 1+\eta_2$.
If asumption \textbf{(C)} is satisfied then for any $x, y \in \R^d$, $t, s \in \R$ we have
\begin{eqnarray}
\label{atijx}
|a_{t,i,j}(x)| &\le& c e^{c |t|},\\
\label{det_At}
|\det(A_t(x))| &\ge& c e^{-c |t|},\\
\label{atijxy}
|a_{t,i,j}(x) - a_{t,i,j}(y)| &\le& c e^{c |t|}|x-y|^{\gamma_1}.
\end{eqnarray}
We also have
\begin{eqnarray}
\label{gamma3}
\gamma_3 &>& \max(1,\beta),\\
\label{V_t_A_t}
  |V_t(x,z)-A_t(x)z|&\le& c e^{c|t|}|z|^{\gamma_3}.
\end{eqnarray}
If (\ref{indices}) is satisfied then
\begin{equation}
\label{gamma123}
\frac{\beta}{\alpha} < 1 + \gamma_1, \quad \frac{1}{\alpha} - \frac{1}{\beta} < \gamma_2, \quad \frac{\beta}{\alpha} < \gamma_3.
\end{equation}
\end{lemma}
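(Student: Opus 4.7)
The statement is a collection of six technical estimates on $A_t(x)$ and its relationship with $V_t(x,z)$; my plan is to dispatch them in turn, each time reducing to a result already established in this section or to a direct consequence of assumption \textbf{(C)}/\textbf{(E)}.

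For \eqref{atijx}, since each entry $a_{t,i,j}(x)$ is dominated by the operator norm of $A_t(x)=D\kappa_t(\chi_t(x))A(\chi_t(x))$, I would combine $|D\kappa_t(y)|\le c e^{c|t|}$ from \eqref{D_kappa_1} with the uniform bound $|A(\cdot)|\le C_3$ from \eqref{bounded}. For \eqref{det_At} I would factor $\det A_t(x)=\det(D\kappa_t(\chi_t(x)))\cdot\det A(\chi_t(x))$; the second factor is bounded below by $C_4$ via \eqref{determinant}, and for the first I note that the matrix ODE $\partial_t D\chi_t(x)=Db(\chi_t(x))D\chi_t(x)$, $D\chi_0(x)=I$ (the analogue of \eqref{diff} for $\chi$, as in Remark \ref{kappa_chi}), yields by Liouville's formula
\[
\det D\chi_t(x)=\exp\!\Bigl(\int_0^t \mathrm{tr}\,Db(\chi_s(x))\,ds\Bigr),
\]
so $|\det D\chi_t(x)|\ge e^{-dC_6|t|}$ by \eqref{b_grad_est}; since differentiating $\kappa_t\circ \chi_t=\mathrm{id}$ gives $D\kappa_t(\chi_t(x))=(D\chi_t(x))^{-1}$, the desired lower bound follows. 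The H\"older estimate \eqref{atijxy} is immediate from \eqref{A_estimate} established just before the lemma, since entrywise $|a_{t,i,j}(x)-a_{t,i,j}(y)|\le |A_t(x)-A_t(y)|$.

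For \eqref{gamma3} I would split: if $\beta\le 1$ then $\gamma_3=1+\eta_2>1\ge\beta$; if $\beta>1$, assumption \eqref{eta2} gives $\eta_2>\beta-1$, hence $\gamma_3>\beta$. For \eqref{V_t_A_t}, the key observation is that, since $x=\kappa_t(\chi_t(x))$ and $A_t(x)z=D\kappa_t(\chi_t(x))\bigl(A(\chi_t(x))z\bigr)$, we have
\[
V_t(x,z)-A_t(x)z=\kappa_t(\chi_t(x)+A(\chi_t(x))z)-\kappa_t(\chi_t(x))-D\kappa_t(\chi_t(x))\bigl(A(\chi_t(x))z\bigr).
\]
By \eqref{D_estimate} each coordinate of $\kappa_t$ has $\eta_2$-H\"older gradient with constant $ce^{c|t|}$ and, crucially, globally (i.e.\ $\rho_{\eta_2,r}(\kappa_t^k)\le ce^{c|t|}$ for every $r>0$, in particular $r=\infty$). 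Applying Remark \ref{Eta_bound} coordinatewise and using $|A(\chi_t(x))z|\le C_3|z|$ gives $|V_t(x,z)-A_t(x)z|\le c e^{c|t|}|z|^{1+\eta_2}=ce^{c|t|}|z|^{\gamma_3}$. Finally, \eqref{gamma123} is read off directly from \eqref{indices}: the first two inequalities are \eqref{indices} verbatim with $\gamma_1=\gamma_2=\eta_1\wedge\eta_2$, while the third follows from $\beta/\alpha<1+(\eta_1\wedge\eta_2)\le 1+\eta_2=\gamma_3$.

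The only genuinely non-routine step is the lower bound on $|\det D\kappa_t|$ in \eqref{det_At}, which needs Liouville's formula; everything else is bookkeeping on top of the flow estimates established earlier in Section \ref{flow} and the Taylor remainder bound of Remark \ref{Eta_bound}.
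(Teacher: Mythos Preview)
Your proof is correct and follows essentially the same route as the paper's: the same appeals to \eqref{bounded}, \eqref{D_kappa_1}, \eqref{A_estimate}, Remark~\ref{Eta_bound}, and Liouville's formula (which the paper cites as \cite[Theorem~7.3]{CL1955} and applies directly to $D\kappa_t$ via \eqref{diff}, rather than to $D\chi_t$ followed by inversion as you do). One small slip in \eqref{det_At}: from $D\kappa_t(\chi_t(x))=(D\chi_t(x))^{-1}$ you need the \emph{upper} bound $|\det D\chi_t(x)|\le e^{dC_6|t|}$, not the lower bound you wrote, to conclude that the reciprocal is bounded below---but this upper bound follows from the same Liouville identity, so the argument is easily repaired.
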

\begin{remark}
\label{rem_assumptions}
Note that \textbf{(D')} clearly implies that for any $x, y \in \R^d$, $t, s \in \R$ we have
$$
|a_{t,i,j}(x) - a_{s,i,j}(x)| \le c e^{c \max(|t|,|s|)}|t-s|^{\gamma_2}.
$$
This and the above lemma shows that assumptions on $A_t(x)$ and $V_t(x,z)$ stated in (8-15) in \cite{KKR2021} are satisfied (locally in $t$).
\end{remark}
\begin{proof}[Proof of Lemma \ref{assumptions}]
By (\ref{bounded}) and (\ref{D_kappa_1}) we obtain (\ref{atijx}).

Put $J_t(x) = -Db(\kappa_t(x))$. For any $t \in \R$, $x \in \R^d$ by (\ref{b_grad_est}) we obtain $|\text{tr} \, J_t(x)| \le C_6 d$, which implies $\left|\int_0^t \text{tr} \, J_s(x) \, ds\right| \le C_6 d |t|$. By (\ref{diff}) and \cite[Theorem 7.3]{CL1955} we obtain
$$
|\det(D\kappa_t(x))|
= \exp\left(\int_0^t \text{tr} \, J_s(x) \, ds\right)
\ge e^{-C_6 d |t|}.
$$
Using this and (\ref{determinant}) we get (\ref{det_At}).

The estimate (\ref{atijxy}) follows from \eqref{A_estimate}, while  (\ref{eta2}) implies (\ref{gamma3}).

Recall that
$$V_t(x,z) = \kappa_{t}(\chi_{t}(x) + A(\chi_{t}(x)) z) -x= \kappa_{t}(\chi_{t}(x) + A(\chi_{t}(x)) z)-\kappa_{t}(\chi_{t}(x)).$$
We apply Remark \ref{Eta_bound}  to the coordinates of the vector functions $\kappa_{t}$ with $y$ replaced by  $\chi_{t}(x) + A(\chi_{t}(x)) z$ and $x$ replaced by $\chi_{t}(x)$, hence we get, by (\ref{b_estimate}),
$$
|V_t(x,z)-A_t(x)z|\le c e^{c|t|} |A(\chi_{t}(x))z|^{1+\eta_2}.
$$
Since $|A(\chi_{t}(x))z|\le C_3 |z|$ we get (\ref{V_t_A_t}).

Finally, (\ref{gamma123}) follows from elementary calculations.
\end{proof}

\subsection{Proofs of the main results}
In the whole section we assume either \textbf{(A)}, \textbf{(C)}, \textbf{(D')} or \textbf{(B)}, \textbf{(C)}, \textbf{(D')}, \textbf{(E)}.

\begin{proof}[Proof of Theorem \ref{main_thm} and Remark \ref{rem_main_thm}]
By Corollary \ref{c213}, for  any $t > 0$, $x, y \in \R^d$ we have
\begin{equation}
\label{main_equality}
u_{t}(x,y) = p_{-t,0}(\chi_{t}(x),y).
\end{equation}
Note that for any $x \in \R^d$ and $t > 0$ we have
$$
\int_{\R^d} u_t(x,y) \, dy = \int_{\R^d} \wt u_t(x,y) \, dy = \int_{\R^d} \check{u}_t(x,y) \, dy = 1,
$$
so it is enough to show (\ref{q_estimate}) for $t \in (0,1]$.
We need to recall some notation from \cite{KKR2021}. Let $G_t$ be the density defined in Section 5 in \cite{KKR2021} (roughly speaking, it is a modifed version of the density $\wt G_t$). For $t, s \in \R$, $s > t$, $x, w \in \R^d$ put (cf. \cite[(5.3)]{KKR2021})
$$
p_{t,s}^x(w) = \frac{1}{|\det(A_s(x))|} G_{s - t}(A_s^{-1}(x) w).
$$
We also define,  for $t, s \in \R$, $s > t$, $x, w \in \R^d$,
$$
\overline{p}_{t,s}^x(w) = \frac{1}{|\det(A_s(x))|} \wt G_{s - t}(A_s^{-1}(x) w).
$$

To the end of the proof we assume that $ |s| \le 1$,  $0 < s - t \le 1$ and $x \in \R^d$.
By \cite[(3.13) and (5.4)]{KKR2021},  we have
\begin{equation}
\label{p_estimate11}
\int_{\R^d} |p_{t,s}(x,y) - p_{t,s}^y(x-y)| \, dy \le c (s - t)^{\eps_0}.
\end{equation}
By Lemma 5.8 from \cite{KKR2021}  we obtain
\begin{equation}
\label{p_estimate12}
\int_{\R^d} |p_{t,s}^y(x-y) - p_{t,s}^x(x-y)| \, dy \le c (s - t)^{\eps_0},
\end{equation}
while Lemma 4.8 from \cite{KKR2021} yields
\begin{equation}
\label{p_estimate13}
\int_{\R^d} |p_{t,s}^x(w) - \overline{p}_{t,s}^x(w)| \, dw \le c (s - t)^{\eps_0}.\end{equation}
Note that
\begin{equation}
\label{tilde_q}
\overline{p}^{\chi_t(x)}_{-t,0}(\chi_t(x)-y) = \frac{1}{|\det(A(\chi_t(x)))|} \wt G_t(A^{-1}(\chi_t(x)) (y - \chi_t(x))) = \wt u_t(x,y)
\end{equation}
for any $t > 0$, $x, y \in \R^d$. Using this, (\ref{main_equality}) and (\ref{p_estimate11}, \ref{p_estimate12},
\ref{p_estimate13})  we get (\ref{q_estimate}).

Now assume that  \textbf{(D)} holds. Then one can easily prove
\begin{equation}
\label{p_estimate14}
\int_{\R^d} |p_{t,s}^x(w) - {p}_{t,s}^{\chi_{t-s}(x)}(w)| \, dw \le c (s - t)^{\eps_0}.\end{equation}
The above estimate can be obtained by slight modifications in the proof of Lemma 5.9 from \cite{KKR2021}.  Here we also need the bound $|\chi_{t-s}(x)-x|\le c |t-s|$, which follows from (\ref{kappa_t}) and is a consequence of the assumption \textbf{(D)}.

Combining the estimates (\ref{p_estimate11}, \ref{p_estimate12},
\ref{p_estimate13}, \ref{p_estimate14})  we obtain
\begin{equation}
\label{p_estimate1}
\int_{\R^d} |p_{t,s}(x,y) -  \overline{p}_{t,s}^{\chi_{t-s}(x)}(x-y)| \, dy \le c (s - t)^{\eps_0}.
\end{equation}
Note that
$$
\overline{p}_{-t,0}^{\chi_{-t}(\chi_{t}(x))}(\chi_t(x)-y) =
\overline{p}_{-t,0}^{x}(\chi_t(x)-y) =
\frac{1}{|\det(A(x))|} \wt G_t(A^{-1}(x) (y - \chi_t(x))).
$$
 Using this, (\ref{main_equality}) and (\ref{p_estimate1}) (with $x$ replaced by $\chi_t(x)$) we get (\ref{q_estimate}) in this case with $\wt u_t(x,y)$ replaced by $\check{u}_t(x,y)$.

\end{proof}

\begin{proof}[Proof of Theorem \ref{main_thm2}]
Fix $\tau > 0$.

By (\ref{main_equality}) and Theorem \cite[Theorem 2.2]{KKR2021} we obtain
$$
\sup_{x,y \in \R^d} |u_t(x,y)| = \sup_{x,y \in \R^d} |p_{-t,0}(\chi_{t}(x),y)| < \infty, \quad 0 < t < \infty.
$$
For $t, s \in \R$, $0 < s - t \le \tau$, $x, y \in \R^d$ we have
\begin{eqnarray*}
|p_{t,s}(x,y) -  \overline{p}^x_{t,s}(x-y)| &\le&
|p_{t,s}(x,y) - p_{t,s}^y(x-y)| \\
&& + |p_{t,s}^y(x-y) - p_{t,s}^x(x-y)| + |p_{t,s}^x(x-y) - \overline{p}^x_{t,s}(x-y)|.
\end{eqnarray*}
Using \cite[Lemmas 4.8 and 5.8]{KKR2021} and some arguments from Section 3.5 from \cite{KKR2021} all these terms are bounded by $c (s - t)^{\eps_0}$, where $c$ depends only on $d$, $\alpha$, $\beta$, $\eta_1$, $\eta_2$, $C_0, \ldots, C_8$, $\sigma(\tau)$. Using this, (\ref{main_equality}) and (\ref{tilde_q}) we get (\ref{q_pointwise_estimate}).
\end{proof}

\begin{proof}[Proof of Theorem \ref{Holder_thm_new}]
Fix $\tau >0$, $\gamma \in (0,\alpha)$ such that $\gamma \le 1$ and $\gamma' \in (\gamma,\alpha)$.
 Let $f: \R^d \to \R$ be a bounded Borel function.
Then for $t > 0$, $x \in \R^d$ we have
$$
U_t f(x) = \int_{\R^d} u_t(x,z) f(z) \, dz = \int_{\R^d} p_{-t,0}(\chi_{t}(x),z) f(z) \, dz = P_{-t,0}f(\chi_{t}(x)).
$$
Using this and \cite[Theorem 2.3]{KKR2021} we obtain for  $x,y \in \R^d$, $t \in (0,\tau]$
$$
|U_t f(x) - U_t f(y)| = |P_{-t,0}f(\chi_{t}(x)) - P_{-t,0}f(\chi_{t}(y))|
\le c |\chi_{t}(x) - \chi_{t}(y)|^{\gamma} t^{-\gamma'/\alpha} \|f\|_{\infty},
$$
where $c$ depends on $\gamma$, $\gamma'$, $d$, $\alpha$, $\beta$, $\eta_1$, $\eta_2$, $C_1, \ldots, C_7$, $C_9$, $\sigma(\tau)$.

By Lemma \ref{chi_derivative} we get $|\chi_{t}(x) - \chi_{t}(y)| \le c e^{c \tau} |x-y|$, which implies the assertion of the theorem.
\end{proof}

\section{Examples}
\label{sec_Examples}
Let us give several examples illustrating various specific issues  of the model. Note that a simplest  example of a L\'evy measure $\nu\in \mathrm{WSC}(\alpha, \beta)$  is a symmetric $\alpha_0$-stable L\'evy measure
\begin{equation}\label{alpha}
\nu(dx)=c\frac{dx}{|x|^{\alpha_0+1}},
\end{equation}
for which
$$
h(r)=\frac{4c}{\alpha_0(2-\alpha_0)} r^{-\alpha_0}
$$
and thus \eqref{h-scaling_u0} holds true with $\beta=\alpha = \alpha_0$ and $C_1=C_2=1$.

\begin{example}
Let $Z_t = (Z_t^{1},\ldots,Z_t^{d})^T$ be such that $Z_t^{1},\ldots,Z_t^{d}$ are independent and for each $i \in \{1,\ldots,d\}$ $Z_t^{i}$ is a one-dimensional, symmetric $\alpha_0$-stable process, where $\alpha_0 \in (0,2)$. Put $\alpha = \beta = \alpha_0$. Then assumptions \textbf{(A)} are satisfied.

One can compare the results in such case with results from \cite{CHZ2020}. In general, their results do not imply ours and our results do not imply theirs. On one hand the results from \cite{CHZ2020} allow to consider matrices $A$ and drifts $b$ to be dependent on time and the drifts can be H{\"o}lder continuous with respect to $x$ variable. These assumptions are less restrictive than ours. However, it is assumed in \cite{CHZ2020} that matrices must be differentiable in $x$ variable and $\alpha \in (1/2,2)$ (when drift is nontrivial). These assumptions are more restrictive than ours. One has also to add that the assertions on the transition density and the corresponding semigroup in \cite{CHZ2020} are stronger than ours.
\end{example}

\begin{example}
Let $Z_t = (Z_t^{1},\ldots,Z_t^{d})^T$ be such that $Z_t^{1},\ldots,Z_t^{d}$ are independent and for each $i \in \{1,\ldots,d\}$ $Z_t^{i}$ is a one-dimensional, symmetric $\alpha_i$-stable process ($\alpha_i \in (0,2)$ and they are not all equal). Put $\alpha = \min(\alpha_1,\ldots,\alpha_d)$ and $\beta = \max(\alpha_1,\ldots,\alpha_d)$.  Then assumptions \textbf{(B)} are satisfied. The SDE (\ref{main}) driven by such process $Z$ and the corresponding anisotropic generator are of great interest see e.g. \cite{C2020}, \cite{C2019}, \cite{CK2020}, \cite[example (Z2) on page 2]{FJR2021}. Note that results from \cite{CHZ2020} do not allow to consider SDE driven by such process.
\end{example}

The next example shows that under just the basic assumptions of Theorem  \ref{main_thm}, the transition probability density may be locally unbounded.

\begin{example} (See \cite[Remark~4.23]{KRS2021}, \cite[Example~4.2]{KKS2021}).
  Let $Z_t = (Z_t^{1},\ldots,Z_t^{d})^T$ be such that $Z_t^{1},\ldots,Z_t^{d}$ are independent and for each $i \in \{1,\ldots,d\}$ $Z_t^{i}$ is a one-dimensional, symmetric $\alpha_0$-stable process, where $\alpha_0 \in (0,2)$, $b \equiv 0$ and
  matrices $A(x)$ are H\"older continuous in $x$, for each $x \in \R^d$ the matrix
  $A(x)$ is a rotation (hence, an isometry) and for any $x$ in some open cone with vertex at $0$, which satisfies $|x| \ge 1$ we have $A(x)\mathbf{e}_1 = x/|x|$. Then assumptions \textbf{(B)}, \textbf{(C)}, \textbf{(D)} are satisfied, so by Theorem \ref{main} the transition density $u_t(x,y)$ exists. However, for $\alpha+1\leq d$, for any $x\in \R^d$ the transition density $u_t(x,y)$ is unbounded at any neighbourhood of the point $y=0$.
\end{example}

Now, we present example of matrices $A$ and a drift $b$ for which assumption \textbf{(D)} is not satisfied (the drift is unbounded) but for which assumptions \textbf{(C)}  and \textbf{(D')} hold.

\begin{example} \label{Ex_D}
Assume that $b(x) = B x$ for some constant $d \times d$ matrix $B$ and any $x \in \R^d$. Assume $d \times d$ matrices $A(x)$ satisfy (\ref{bounded}), (\ref{determinant}), (\ref{Holder}) with $\eta_1 = 1$ and additionally $A(x)$ is constant when $|x|$ is large. More precisely, there exists a $d \times d$ matrix $J$ and $p_0 > 0$ such that for any $x \in \R^d$ if $|x| \ge p_0$ then $A(x) = J$.

Assume also that $\beta  \in (0,2)$. Then one can show that conditions \textbf{(C)} (with $\eta_2 = 1$) and \textbf{(D')} are satisfied.

Indeed, $\eta_2 = 1 > \min(0,\beta-1)$, $Db(x) = B$ so (\ref{eta2}), (\ref{b_grad_est}), (\ref{b_estimate}) are satisfied. This implies that conditions \textbf{(C)} holds. We also have $\kappa_t(x) = e^{-Bt} x$, $\chi_t(x) = e^{Bt} x$. One can easily show that there exists $p_1 > 0$ such that for any $t \in \R$, $x \in \R^d$ we have
$$
|e^{Bt} x| \ge \frac{1}{p_1} e^{-p_1|t|}|x|.
$$
We may assume that $|t - s| \le 1$ and $|s| < |t|$.

We have $A_t(x) = e^{-Bt} A(e^{Bt} x)$. Note that
$$
|A(e^{Bt}x) - A(e^{Bs}x)| \le c |(e^{Bt} - e^{Bs}) x| \le c |t - s| e^{c |t|} |x|.
$$
If $|x| \ge p_0 p_1 e^{p_1 |t|}$ then
$$
|e^{Bt}x| \ge \frac{1}{p_1} e^{-p_1|t|}|x| \ge p_0, \quad \text{and} \quad
|e^{Bs}x| \ge \frac{1}{p_1} e^{-p_1|t|}|x| \ge p_0,
$$
so $|A(e^{Bt}x) - A(e^{Bs}x)| = 0$. On the other hand, if $|x| \le p_0 p_1 e^{p_1 |t|}$ then
$$
|A(e^{Bt}x) - A(e^{Bs}x)| \le  c p_1 p_0 |t - s| e^{(c + p_1) |t|}.
$$
This clearly implies (\ref{D11}).
\end{example}

Our last example explains why in the case \textbf{(B)}, i.e. for a cylindrical noise which has \emph{different} scaling indices of the coordinates,  additional assumption \textbf{(E)} on the H\"older indices of the coefficients should be made. This example is similar to Example 2.7 from \cite{KKR2021}.

\begin{example}
\label{ex2}
  Let $Z^i,i=1, 2$ be independent, symmetric $\alpha_i$-stable processes with $0 < \alpha_1 < \alpha_2 \le 1$. The process $Z=(Z^1, Z^2)^T$ fits to our case  \textbf{(B)} with $\alpha=\alpha_1$, $\beta=\alpha_2$. In this example, we show that  in such -- extremely spatially non-homogeneous -- setting the additional assumption \textbf{(E)} is crucial in the sense that, without this condition, the structure of the transition density can be quite different.

  For any $x \in \R^2$ let $A(x)$ be the identity matrix and $b(x) = b\begin{pmatrix}  x_1\\   x_2 \end{pmatrix} = \begin{pmatrix}  x_2\\   -x_1 \end{pmatrix}$. Then for any $x \in \R^2$ and $t \in \R$ we have
	$$
	\kappa_t(x) = \begin{pmatrix}  x_1 \cos t + x_2 \sin t\\   -x_1 \sin t + x_2 \cos t \end{pmatrix}.
	$$
	It follows that
  $$
 A_t(x) =\left(
                                        \begin{array}{cc}
                                          \cos t & \sin t \\
                                          - \sin t & \cos t \\
                                        \end{array}
                                      \right)
  $$
Note that $A_t(x)$ does not depend on $x$ so we will write $A_t$ instead of $A_t(x)$. We have $\kappa_t(x) = A_t x$ and $\chi_t(x) = \kappa_{-t}(x) = A_{-t} x$.

Note that assumptions \textbf{(B)}, \textbf{(C)}, \textbf{(D')} are satisfied with $\eta_1 = \eta_2 = 1$. For any $x, z \in \R^d$ and $t \in \R$ we have
$$
V_t(x,z) = \kappa_{t}(\chi_{t}(x) + A(\chi_{t}(x)) z) - x = \kappa_t(A_{-t} x + z) - x = A_t z
$$
Therefore the additional assumption  \textbf{(I)} holds true since each  operator $T^{t,z}$ is just an isometry which corresponds to the shift of the variable $x\mapsto x+A_tz$. The assumption \textbf{(E)} is equivalent to
\begin{equation}
\label{1alpha1beta}
\frac{1}{\alpha} - \frac{1}{\beta} < 1.
\end{equation}
If this condition is satisfied then assertions of Theorems \ref{main_thm} and \ref{main_thm2} hold. Then using (\ref{Euler}) and these theorems one has for any $t \in (0,1]$ and $x \in \R^d$
$$
\wt{u}_{t}(0,0) = \wt{G}_{t}(0)  = c t^{-\frac{1}{\alpha}-\frac{1}{\beta}},
$$
$$
c t^{-\frac{1}{\alpha}-\frac{1}{\beta}} - c_1 t^{-\frac{1}{\alpha}-\frac{1}{\beta} + \eps_0} \le u_t(0,0) \le c t^{-\frac{1}{\alpha}-\frac{1}{\beta}} + c_1 t^{-\frac{1}{\alpha}-\frac{1}{\beta} + \eps_0},
$$
which implies that there exists sufficiently small $\tau \in (0,1]$ and $c_2 > 1$ such that for any $t \in (0,\tau]$ we have
\begin{equation}
\label{bounds_near_0}
c_2^{-1} \le \frac{u_t(0,0)}{t^{-\frac{1}{\alpha}-\frac{1}{\beta}}} \le c_2.
\end{equation}
For $\frac{1}{\alpha} - \frac{1}{\beta} > 1$ the situation changes drastically; namely, we have for $t \in (0,\pi/6]$
\begin{equation}
\label{on-diagonal-est}
\frac{u_{t}(0,0)}{t^{-\frac{1}{\alpha}-\frac{1}{\beta}}} \le c t^{\frac{1}{\alpha} - \frac{1}{\beta} -1} \to 0 \quad \text{as $t \to 0^+$,}
\end{equation}
so (\ref{bounds_near_0}) does not hold.

Now we will prove (\ref{on-diagonal-est}). The justification is similar to the proof of (19) in Appendix A in \cite{KKR2021}. We have $u_t(0,0) = p_{-t,0}(\kappa_{-t}(0),0) = p_{-t,0}(0,0)$. Recall that for $-\infty < t < s < \infty$ and $x \in \R^d$ $p_{t,s}(x,\cdot)$ is the transition density of the solution of the SDE
$$
dX_r =  A_r dZ_r, \quad \text{$X_t = x, \, r \ge t$.}
$$
Hence, for $t > 0$ $p_{-t,0}(0,\cdot)$ is the transition density of the solution of the SDE
\begin{equation}
\label{SDE_Ar}
dX_r =  A_r dZ_r, \quad \text{$X_{-t} = 0, \, r \ge -t$.}
\end{equation}
Let $X_r$ be the solution of (\ref{SDE_Ar}) and $Z_r = (Z_r^1,Z_r^2)$. We have
$$
X_0 = \int_{-t}^0 A_r dZ_r
= \begin{pmatrix}
\int_{-t}^0 \cos r dZ_r^1 + \int_{-t}^0 \sin r dZ_r^2 \\
-\int_{-t}^0 \sin r dZ_r^1 + \int_{-t}^0 \cos r dZ_r^2 \end{pmatrix}.
$$
The characteristic function of $X_0$ has the form
$$
\phi(z)=\exp\left\{-\int_{-t}^0(|(A_r z)_1|^{\alpha_1}+|(A_r z)_2|^{\alpha_2})\, dr\right\},
$$
where $(A_r z)_1 = z_1 \cos r - z_2 \sin r$, $(A_r z)_2 = z_1 \sin r + z_2 \cos r$.
Thus the distribution density equals
$$
p(x)=\frac{1}{(2\pi)^2}\int_{\R^2}\exp\left\{-ix\cdot z-\int_{-t}^0(|(A(r)z)_1|^{\alpha_1}+|(A(r)z)_2|^{\alpha_2})\, dr\right\}\, dz.
$$
In particular,
$$
p(0)=\frac{1}{(2\pi)^2}\int_{\R^2}\exp\left\{-\int_{-t}^0(|(A(r)z)_1|^{\alpha_1}+|(A(r)z)_2|^{\alpha_2})\, dr\right\}\, dz.
$$

For $t \in [0,\pi/6]$ let us  estimate from below
$$
\int_{-t}^0(|(A_rz)_1|^{\alpha_1}+|(A_rz)_2|^{\alpha_2})\, dr\geq { \int_{-t}^0|(A_rz)_2|^{\alpha_2}\, dr}.
$$
We will consider 2 cases.

\emph{Case 1:} $|z_2|>\frac{t}2|z_1|$.

Then for $r \in [-t/8,0]$ we have
$$
|(A_rz)_2| = |z_1 \sin r + z_2 \cos r| \ge |z_2 \cos r| - |z_1 \sin r|.
$$
We have $|z_1 \sin r| \le |r| |z_1| \le t |z_1|/8 \le |z_2|/4$ and $|z_2 \cos r| \ge \cos(\pi/6) |z_2| \ge |z_2|/2$.
This  gives
$$
\int_{-t}^0|(A_rz)_2|^{\alpha_2}\, dr \geq c t |z_2|^{\alpha_2}.
$$

\emph{Case 2:} $|z_2|\leq \frac{t}2|z_1|$.

For $r \in [-t,-3t/4]$ we have
$$
|(A_rz)_2| = |z_1 \sin r + z_2 \cos r| \ge |z_1 \sin r| - |z_2 \cos r| \ge \sin\left(\frac{3t}{4}\right) |z_1| - \frac{t |z_1|}{2}
\ge \left(\frac{9}{4 \pi} - \frac{1}{2}\right) t |z_1|.
$$
which gives
$$
{\int_{-t}^0|(A_rz)_2|^{\alpha_2}\, dr\geq c t^{1+\alpha_2} |z_1|^{\alpha_2}}.
$$

Now we can complete the estimate of $p(0)$. We have
$$
\begin{aligned}
p(0)&\leq \frac{1}{(2\pi)^2}\int_{\R^2}\exp\left\{-\int_{-t}^0|(A_r z)_2|^{\alpha_2}\, dr\right\}\, dz
\\&\leq \frac{1}{(2\pi)^2} \int_{|z_2|> \frac{t}2|z_1|}\exp\left\{-c t |z_2|^{\alpha_2}\right\}\, dz
+ \frac{1}{(2\pi)^2}\int_{|z_2|\leq \frac{t}2|z_1|}\exp\left\{-c t^{1+\alpha_2} |z_1|^{\alpha_2}\right\}\, dz=:I_1+I_2.
\end{aligned}
$$
Since
$$
I_1=\frac{4}{(2\pi)^2 t}\int_{\R}|z_2|\exp\left\{-c t |z_2|^{\alpha_2}\right\}\, dz_2= c t^{-1-{2/\alpha_2}},
$$
$$
I_2=\frac{t}{(2\pi)^2}\int_{\R}|z_1|\exp\left\{-c t^{1+\alpha_2} |z_1|^{\alpha_2}\right\}\, dz_1= c t^{-1-{2/\alpha_2}},
$$
this completes the proof of \eqref{on-diagonal-est}.
\end{example}

%
%

\textbf{{Acknowledgements}}
The first and the third author were supported in part by the National Science Centre, Poland, grant no. 2019/35/B/ST1/01633

The second author has been supported through
the DFG-NCN Beethoven Classic 3 programme, contract no.
2018/31/G/ST1/02252 (National Science Center, Poland) and SCHI-419/11-1
(DFG, Germany)

\end{document}